\date{}
\newtheorem{theorem}{Theorem}[section]
\newtheorem{obs}[theorem]{Observation}
\newtheorem{lemma}[theorem]{Lemma}
\newtheorem{corollary}[theorem]{Corollary}
\newtheorem{claim}[theorem]{Claim}
\theoremstyle{definition}
\newtheorem{definition}[theorem]{Definition}
\newcommand{\lam}{\lambda}
\newcommand{\one}{\mathbbm{1}}
\title{Cycles with many chords}
\author{
Nemanja Dragani\'c\thanks{
Department of Mathematics, ETH, Z\"urich, Switzerland. Research supported in part by SNSF grant 200021\_196965.
\textbf{\{nemanja.draganic, abhishek.methuku, david.munhacanascorreia, benjamin.sudakov\}@math.ethz.ch}.
}
\and
Abhishek Methuku\footnotemark[1]
\and
David Munh\'a Correia\footnotemark[1]
\and
Benny Sudakov\footnotemark[1]}
\begin{document}
\maketitle

\begin{abstract}
How many edges in an $n$-vertex graph will force the existence of a cycle with as many chords
as it has vertices? Almost 30 years ago, Chen, Erd\H{o}s and Staton considered this question and showed that any $n$-vertex graph with $2n^{3/2}$ edges contains such a cycle. 
We significantly improve this old bound by showing that $\Omega(n\log^8n)$ edges are enough to guarantee the existence of such a cycle. 

Our proof exploits a delicate interplay between certain properties of random walks in almost regular expanders. We argue that while the probability that a random walk of certain length in an almost regular expander is self-avoiding is very small, one can still guarantee that it spans many edges (and that it can be closed into a cycle) with large enough probability to ensure that these two events happen simultaneously.
\end{abstract}

\section{Introduction}
One of the classical problem frameworks in combinatorics deals with questions of the following type. How many edges does an $n$-vertex graph need to have to contain a subgraph with a certain prescribed structure? In many instances of such problems, it turns out that we can find subgraphs with very interesting structure only assuming very weak bounds on the number of edges.

For example, Janzer and Sudakov~\cite{janzer2022resolution} showed that any $n$-vertex graph with average degree at least $\Omega(\log\log n
)$ contains a $k$-regular subgraph, which is optimal up to a constant factor and answers an old question of Erd\H{o}s and Sauer. Liu and Montgomery~\cite{liu2023solution} recently solved several open problems using methods related to sublinear expansion. In particular, they showed that any graph with a large enough constant average degree contains a cycle whose length is a power of $2$.
Another result of similar flavour by Buci{\'c}, Gishboliner, and Sudakov~\cite{bucic2022cycles} shows that for $k\geq 3$, every $k$-regular Hamiltonian graph has cycles of $n^{1-o(1)}$ many lengths, asymptotically solving a problem of Jacobson and Lehel. Furthermore, Fernández and Liu~\cite{fernandez2023build} proved a conjecture of Thomassen \cite{thomassen1989configurations}, showing that large enough constant average degree forces the existence of a pillar (two vertex-disjoint cycles of the same length, along with vertex-disjoint paths of the same length which connect matching vertices in order around the cycles).

Many of the problems of this sort also deal with conditions which force the existence of cycles with chords. Answering a question of Erd\H{o}s~\cite{erdHos1975problems}, Bollob\'as~\cite{MR539938} proved that a large enough constant average degree is enough to force the existence of a cycle whose chords also contain a cycle. Extending this result, Chen, Erd\H{o}s and Staton~\cite{chen1996proof} proved that for every $k\geq 2$ there is a constant $c_k$ such that any graph with average degree at least $c_k$ contains $k$ cycles $C_1,\ldots, C_k$, such that the edges of $C_{i+1}$ are chords of the cycle $C_i$. This answered a question of Bollob\'as~\cite{MR539938}. More recently, Fern{\'a}ndez, Kim, Kim and Liu~\cite{fernandez2022nested} strengthened the result of Bollob\'as, showing that large enough constant average degree is enough to force the existence of a cycle whose chords contain a cycle whose vertices follow the orientation of the first cycle. Another similar result was shown by Thomassen \cite{thomassen1983girth}, who proved that for every $k\geq 1$, there exists $g_k$ such that any graph with minimum degree $3$ and girth at least $g_k$ contains a cycle with at least $k$ chords. 

In 1996, Chen, Erd\H{o}s and Staton~\cite{chen1996proof} also considered the following natural question: how many edges force the existence of a cycle with as many chords as it has vertices? They showed that if an $n$-vertex graph has minimum degree at least $2\sqrt{n}$ then it contains a cycle which has $n$ chords, thus showing that $2n^{3/2}$ edges are enough. 
In this paper, we significantly improve this old result of Chen, Erd\H{o}s and Staton, by showing that $\Omega(n\log^8n)$ edges are enough to force a cycle with at least as many chords as it has vertices.

\begin{theorem}\label{thm:main}
If $n$ is sufficiently large, then every $n$-vertex graph with at least $n\log ^8 n$ edges contains a cycle $C$ with at least $|C|$ chords.
\end{theorem}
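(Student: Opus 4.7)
The plan is to follow the strategy outlined in the abstract: reduce $G$ to an almost-regular expander and then realise the desired cycle as a closed random walk. First, pass from $G$ to a subgraph $H$ on $n' \le n$ vertices which is almost $d$-regular with $d = \Theta(\log^8 n)$, and has strong spectral expansion. This kind of regularization is by now standard: first extract a subgraph of minimum degree $\Omega(\log^8 n)$, then refine to obtain both regularity up to a constant factor and (spectral or combinatorial) expansion.

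Next, run a simple random walk $W = v_0, v_1, \ldots, v_\ell$ on $H$, with $\ell$ chosen so that the expected number of $H$-edges inside $\{v_0,\ldots,v_\ell\}$ comfortably exceeds $3(\ell+1)$; this forces $\ell = \Theta(n/d)$. Consider the three events
\begin{align*}
\mathcal{A} &= \{v_0, \ldots, v_\ell \text{ are pairwise distinct}\},\\
\mathcal{B} &= \{|E(H[V(W)])| \geq 2(\ell+1)\},\\
\mathcal{D} &= \{v_0v_\ell \in E(H)\}.
\end{align*}
On $\mathcal{A} \cap \mathcal{B} \cap \mathcal{D}$, the walk closed by $v_0v_\ell$ is a cycle $C$ of length $|C| = \ell+1$ whose vertex set spans at least $2|C|$ edges of $H$, so $C$ has at least $|C|$ chords, as required. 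It therefore suffices to prove $\mathbb{P}(\mathcal{A} \cap \mathcal{B} \cap \mathcal{D}) > 0$.

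The main obstacle is precisely this intersection. Standard spectral arguments easily handle each event individually: the expected number of $H$-edges in $V(W)$ is $\Theta(\ell^2 d/n)\gg \ell$, and concentration via a martingale/decoupling argument exploiting the spectral gap yields $\mathbb{P}(\mathcal{B}) \ge 1/2$; mixing after $\ell$ steps gives $\mathbb{P}(\mathcal{D}) = \Theta(d/n')$; and $\mathbb{P}(\mathcal{A}) \ge \exp(-O(\ell^2/n'))$ is positive but exponentially small for the required $\ell$. Because $\mathbb{P}(\mathcal{A})$ is so tiny, a union bound is hopeless, and the heart of the proof has to be that $\mathbb{P}(\mathcal{B} \cap \mathcal{D} \mid \mathcal{A})$ remains bounded away from $0$. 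This is the delicate interplay the abstract refers to: one must argue that conditioning on self-avoidance does not destroy either the spectral-type concentration of the induced edge count or the approximate uniformity of the walk's endpoint. I expect this step to be carried out by a careful coupling between the self-avoiding walk measure and the unconditioned walk, together with concentration bounds for functions of random walks on expanders that are robust enough to survive the exponential-factor conditioning imposed by $\mathcal{A}$.
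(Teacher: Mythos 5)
Your high-level outline matches the paper's: regularise to an almost-regular expander, run a random walk, and make the walk self-avoiding, edge-rich, and closable all at once. But the mechanism by which the intersection probability is shown positive is genuinely different from the paper's, and your version has a gap you have not closed.

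You propose to show that $\mathbb{P}(\mathcal{B}\cap\mathcal{D}\mid\mathcal{A})>0$ by some coupling between the self-avoiding walk measure and the ordinary walk, and you only claim $\mathbb{P}(\mathcal{B})\ge 1/2$. That combination does not work as stated: $\mathbb{P}(\mathcal{A})$ is of order $e^{-\Theta(t/k)}$, so a constant-probability bound on $\mathcal{B}$ tells you nothing about $\mathcal{B}$ conditioned on $\mathcal{A}$ --- the event $\mathcal{A}$ is so rare that conditioning on it can in principle destroy any polynomially strong bound. You acknowledge this is "the heart of the proof," but a coupling between the self-avoiding walk measure and the unconditioned walk on a sparse expander is itself a hard and unresolved sub-problem; nothing in the proposal indicates how to carry it out, and the paper does not do it.

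What the paper actually does is avoid conditioning altogether. It proves that $\mathbb{P}(\mathcal{A})\ge e^{-c_1 t/k}$ with a very small constant $c_1$ (Theorem~\ref{thm:self-avoiding}, $c_1\approx 10\beta=10^{-27}$), and that the \emph{failure} probabilities of the other two events are at most $e^{-c_2 t/k}$ with a much larger $c_2$ (of order $10^{-26}$ and $10^{-25}$). Then $\mathbb{P}(\mathcal{A}\cap\mathcal{B}\cap\mathcal{D})\ge \mathbb{P}(\mathcal{A})-\mathbb{P}(\overline{\mathcal{B}})-\mathbb{P}(\overline{\mathcal{D}})>0$ without any conditioning argument. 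The crux is therefore not a coupling but an \emph{exponentially strong} concentration bound for the number of induced edges, with a controllable constant in the exponent. This is exactly what your sketch is missing, and it is where the paper's real work lies: Lemma~\ref{lem:many edges}, which in turn relies on a decomposition of the almost-regular graph into many root-disjoint star forests (Corollary~\ref{cor:star forests}) so that hitting the leaf sets in many stars can be analysed by a union bound over subfamilies, producing failure probability $e^{-\Omega(t/k)}$ rather than merely a constant. A direct martingale/Azuma bound on the induced edge count will not give the needed exponential strength, because a single step of the walk can change the edge count by up to $\Delta$. Until you specify how to get a bound of the form $\mathbb{P}(\overline{\mathcal{B}})\le e^{-Ct/k}$ with $C$ \emph{tunable} to exceed the constant in the self-avoidance lower bound, the argument does not close.

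There is also a smaller structural difference: the paper does not just count edges in $V(W)$; it partitions the middle of the walk into $k$ subsampled sets $W_i(k)$ and shows each spans many edges. Self-avoidance then makes these sets vertex-disjoint, so their edges are automatically distinct chords. Your $\mathcal{B}$ counts edges on all of $V(W)$ at once, which is fine in spirit but would still need the same exponential concentration, and the arithmetic "at least $2(\ell+1)$ edges implies at least $|C|$ chords" must account for the $|C|$ cycle edges themselves, so you would want $|E(H[V(W)])|\ge 2|C|$, which you have, but with no proof.

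In summary: correct strategy, but the step "$\mathbb{P}(\mathcal{B}\cap\mathcal{D}\mid\mathcal{A})$ stays bounded away from $0$" is both unproved and not how the paper proceeds; the actual proof replaces conditioning with an exponentially strong, constant-tunable concentration inequality for edges inside the walk, obtained via a star-forest decomposition of the almost-regular host graph.
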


\noindent \textbf{Overview of the proof.}  Initially, we undertake a process of cleaning the graph, i.e. finding a subgraph that is nearly regular (with a constant factor difference between the minimum and maximum degrees), good expansion properties, and a sufficiently high average degree. Subsequently, we investigate a random walk of an appropriate length $\ell$ within this subgraph. We consider two critical events: firstly, we analyze the probability that the random walk is self-avoiding, meaning that it does not revisit any of the previously visited vertices. Secondly, we assess what is the likelihood of the set of vertices visited by the random walk to span at least $\ell$ chords. While the occurrence of the first event is characterized by an exponentially small probability $q$, we carefully establish that the second event still holds with probability more than $1-q$.  Crucially, for bounding the probability for the first event we use the fact that the obtained graph has good expansion properties. For the second event, directly applying standard concentration inequalities does not yield a strong enough bound on the required probability. To remedy the situation, we prove an edge-decomposition result in almost-regular graphs, which combined with concentration inequalities produces the required bound. \\

\noindent \textbf{Notation.} We use standard graph theoretic notation throughout the paper. In particular, for a graph $G$, we denote by $d(G)$ its average degree, and by $\delta(G)$, $\Delta(G)$ its minimum degree and maximum degree, respectively. By $e(G)$, we denote the number of edges of $G$, and for $S\subseteq V(G)$, by $e_G(S)$ we denote the number of edges induced by $S$. For two disjoint sets $A, B \subseteq V(G)$, $e_G(A, B)$ is the number of edges of $G$ which are incident to both $A$ and $B$. We omit the subscripts if it is clear from the context which graph we refer to. Given an event $E$ in a probability space, we denote by $\one_E$ the indicator random variable of $E$, which is equal to $1$ when $E$ holds, and $0$ otherwise.

\section{Preliminaries}
In this section, we collect several useful definitions and results used in our proofs.

\begin{definition}
Let $K > 0$ and let $G$ be a graph. We say that $G$ is $K$-almost-regular if $\Delta(G) \le K \delta(G)$.
\end{definition}

\begin{definition}
Let $\lambda > 0$. We say that a graph $G$ is a $\lambda$-expander if every set $X \subseteq V(G)$ with $|X| \le \frac{1}{2}|V(G)|$ satisfies $e(X, \overline{X}) \ge \lambda d(G) |X|$.   
\end{definition}

\noindent We use the following lemma from~\cite{bucic2020nearly}, which states that every graph contains an almost regular subgraph whose average degree is at most by a logarithmic factor smaller than that of the original graph.
\begin{lemma} \label{almostregsubgraph} Every graph $G$ on $n$ vertices contains a $6$-almost regular subgraph $G' \subseteq G$ with average degree at least $\frac{d(G)}{100 \log n}$.
\end{lemma}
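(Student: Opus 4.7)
The plan is to combine a dyadic bucketing of vertex degrees with Erd\H{o}s's classical minimum-degree cleanup trick, iterating if the resulting subgraph is not yet $6$-almost-regular.

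First I would partition $V(G)$ into dyadic degree classes $V_i := \{v : 2^i \le d_G(v) < 2^{i+1}\}$ for $i = 0, 1, \ldots, \lfloor \log_2 n\rfloor$, giving $L \le \log_2 n + 1$ nonempty classes. I would then orient every edge of $G$ from its lower-degree to its higher-degree endpoint (breaking ties arbitrarily) and, for each $i$, let $E_i$ be the set of edges whose tail lies in $V_i$. Since $\sum_i |E_i| = e(G) = \tfrac12 n\,d(G)$, pigeonhole yields an index $i^*$ with $|E_{i^*}| \ge e(G)/L = \Omega(n\,d(G)/\log n)$, and all of these edges lie in the induced subgraph $H := G\bigl[\bigcup_{j \ge i^*} V_j\bigr]$, whose vertices all have $G$-degree at least $2^{i^*}$. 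In particular $e(H) = \Omega(n\,d(G)/\log n)$.

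I would next apply Erd\H{o}s's cleanup to $H$: iteratively delete any vertex of current degree below $\tfrac12 d(H)$. Each deletion removes fewer than $\tfrac12 d(H)$ edges and exactly one vertex, so the average degree of the residual graph is non-decreasing; the final nonempty subgraph $H^*$ satisfies $\delta(H^*) \ge \tfrac12 d(H) = \Omega(d(G)/\log n)$ and every vertex of $H^*$ still has $G$-degree at least $2^{i^*}$. If $\Delta(H^*) \le 6\,\delta(H^*)$ I would output $G' := H^*$; otherwise I would iterate with $H^*$ in place of $G$, noting that each iteration strictly truncates the degree range of the current graph from below while only increasing its average degree.

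The main obstacle is the accounting across iterations: a naive analysis pays a fresh $O(\log n)$ pigeonhole factor per round, compounding to $(\log n)^{\Theta(\log n)}$, which is far too large. The fix is to observe that later rounds operate on graphs whose dyadic degree range has strictly shrunk (since the pigeonhole step discards all vertices of $G$-degree below the threshold), so the pigeonhole factors in successive rounds shrink geometrically and sum to only a constant overhead on top of the initial $O(\log n)$ loss. A slicker alternative that avoids iteration altogether is to pigeonhole once on windows of $c$ consecutive dyadic classes for a suitable constant $c$, so that the chosen restriction $G[V_{i^*} \cup \cdots \cup V_{i^*+c}]$ already has $G$-degree ratio at most $2^{c+1}$; a single Erd\H{o}s cleanup on this restriction then directly produces $G'$ with $\delta(G') = \Omega(d(G)/\log n)$ and $\Delta(G')/\delta(G') \le 6$, once one shows that a constant fraction of $e(G)$ lies inside some such window (recursing into the high-degree tail otherwise, where the average degree has strictly grown).
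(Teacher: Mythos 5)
For context, the paper does not prove this lemma itself; it imports it verbatim from \cite{bucic2020nearly}, so there is no in-house argument to compare against. Judged on its own terms, your proposal has a genuine gap precisely where you flag ``the main obstacle.'' The claim that ``each iteration strictly truncates the degree range of the current graph from below,'' so that pigeonhole losses decay geometrically, conflates two different degree ranges: passing from $G$ to $H:=G\bigl[\bigcup_{j\ge i^*}V_j\bigr]$ truncates the range of \emph{$G$-degrees}, but the dyadic pigeonhole in the next round is over degrees \emph{inside the current graph} $H^*$, and $\Delta(H^*)$ is completely uncontrolled. A concrete failure is $G=K_{\sqrt n,\,n-\sqrt n}$: the small side $A$ has $G$-degree $n-\sqrt n$ and the large side $B$ has $G$-degree $\sqrt n$. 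Every edge is oriented from $B$ to $A$, so $i^*$ is the class of $B$ and $\bigcup_{j\ge i^*}V_j=V(G)$, giving $H=G$. The Erd\H{o}s cleanup deletes nothing since $\delta(G)=\sqrt n>d(G)/2=\sqrt n-1$, hence $H^*=G$, the ratio $\Delta(H^*)/\delta(H^*)$ is still about $\sqrt n$, and your iteration reproduces $G$ forever. The windowed alternative fails on the same example: every edge joins dyadic classes roughly $\tfrac12\log_2 n$ apart, so no constant-width window contains a single edge, while the ``high-degree tail'' beyond any window is just the independent set $A$, so the claim that ``the average degree has strictly grown'' there is false.

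The structural reason is that both of your moves --- restricting to a degree-upper-set, and iterated min-degree deletion --- can only delete vertices, and in particular only delete \emph{low-degree} vertices. In the example, the intended $6$-almost-regular subgraph keeps all of the small high-degree side $A$ and only $\Theta(\sqrt n)$ vertices of the large low-degree side $B$: lowering $\Delta$ requires \emph{thinning} the large low-degree class without discarding it wholesale, and nothing in your proposal can do that. Any complete argument needs a mechanism that brings the maximum degree down by sparsifying, not merely by truncating from below.
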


\subsection{Finding an almost regular expanding subgraph}
\noindent The goal of this subsection is to prove the following standard statement which allows us to find a (weakly) expanding subgraph in any graph with large enough degree. Its proof is a standard application of the density increment method.

\begin{lemma}
\label{findingalmostregexpander}
Let $G$ be an $n$-vertex graph with average degree $d \ge \log^2 n$ and let $n$ be large enough. Then there exists a bipartite subgraph $G' \subseteq G$ with the following properties.
\begin{itemize}
    \item $d(G') \geq \frac{d}{600\log n}$.
    \item $G'$ is $100$-almost-regular.
    \item $G'$ is a $\frac{1}{10\log n}$-expander.
\end{itemize}
\end{lemma}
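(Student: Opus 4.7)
The plan is to construct $G'$ in three stages: pass to a bipartite subgraph, then to an almost-regular subgraph via Lemma~\ref{almostregsubgraph}, and finally enforce expansion by a standard density-increment argument realized as a single maximization step. A key observation is that this last step automatically preserves the bounded maximum degree inherited from the second stage, so no additional cleanup is needed to regain almost-regularity.

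For the first stage, a uniform random $2$-coloring of $V(G)$ retains each edge across the resulting bipartition with probability $1/2$, so averaging produces a bipartite subgraph $G_1 \subseteq G$ with $e(G_1) \ge e(G)/2$ and $d(G_1) \ge d/2$. Next, Lemma~\ref{almostregsubgraph} applied to $G_1$ yields a bipartite, $6$-almost-regular subgraph $G_2 \subseteq G_1$ with $d(G_2) \ge d/(200 \log n)$ and in particular $\Delta(G_2) \le 6\, d(G_2)$.

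For the third stage, I would set $\epsilon := 1/(5 \log n)$ and take $G' \subseteq G_2$ to be a vertex-induced subgraph maximizing $\Phi(H) := e(H)/|V(H)|^{1+\epsilon}$. Since $G_2$ is itself a candidate, the inequality $\Phi(G') \ge \Phi(G_2)$ rearranges to $d(G') \ge (|V(G')|/|V(G_2)|)^{\epsilon}\, d(G_2) \ge e^{-1/5}\, d(G_2) \ge d/(400 \log n)$, and $G'$ inherits bipartiteness from $G_2$. For expansion, fix $X \subseteq V(G')$ with $|X| = \alpha |V(G')| \le |V(G')|/2$; applying maximality separately to $G'[X]$ and $G'[\overline{X}]$ gives $e(G'[X]) \le \alpha^{1+\epsilon} e(G')$ and $e(G'[\overline{X}]) \le (1-\alpha)^{1+\epsilon} e(G')$, so $e_{G'}(X, \overline{X}) \ge e(G')\,(1 - \alpha^{1+\epsilon} - (1-\alpha)^{1+\epsilon})$. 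The parenthesized function is concave in $\alpha$, vanishes at $0$, and at $\alpha = 1/2$ equals $1 - 2^{-\epsilon} > \epsilon/2$, so concavity forces it to dominate the linear function $\alpha \epsilon$ on $[0, 1/2]$; translating back yields $e_{G'}(X, \overline{X}) \ge d(G')|X|/(10 \log n)$ as required.

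The main obstacle is to verify $100$-almost-regularity of $G'$, since the maximizer need not a priori be regular. For the minimum degree, applying maximality to the one-vertex removal $G' - v$ forces $d_{G'}(v) \ge d(G')/2$, so $\delta(G') \ge d(G')/2$. For the maximum degree, each vertex's degree in $G'$ is bounded by its degree in $G_2$, which by $6$-almost-regularity of $G_2$ is at most $6\, d(G_2) \le 6 e^{1/5}\, d(G') \le 8\, d(G')$. Thus $\Delta(G')/\delta(G') \le 16 < 100$, and $G'$ satisfies all three desired properties.
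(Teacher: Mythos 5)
Your proof is correct, and it realizes the same density-increment idea as the paper's proof in a different, arguably cleaner form. The paper runs an explicit iterative cleaning procedure: at each step it either deletes a low-degree vertex or, finding a set $U$ with a small cut, restricts to $H[U]$ or $H[\overline{U}]$, tracking how the average degree can degrade (losing a $(1-\lambda)$ factor on at most $\log n$ steps where the vertex set halves). You instead pick, in one shot, the induced subgraph $G'\subseteq G_2$ maximizing the potential $e(H)/|V(H)|^{1+\epsilon}$ with $\epsilon=1/(5\log n)$, and then derive the degree bound, the minimum degree, and the expansion directly from the maximality inequalities applied to $G_2$, $G'-v$, $G'[X]$ and $G'[\overline{X}]$. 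The two approaches are morally equivalent (the potential-function maximizer is a fixed point of the iterative procedure), but the single-maximization version avoids bookkeeping of how many times the vertex set can shrink, gives slightly better constants, and the concavity argument for $1-\alpha^{1+\epsilon}-(1-\alpha)^{1+\epsilon}\ge \alpha\epsilon$ is a nice replacement for the paper's Claim~\ref{claimexpcleaning}. Your key observation that $\Delta(G')\le\Delta(G_2)\le 6d(G_2)$ is preserved for free because $G'$ is induced in $G_2$ is also present, implicitly, in the paper's last paragraph, so both proofs close the almost-regularity argument the same way. All the arithmetic checks out: $d(G')\ge e^{-1/5}d(G_2)\ge d/(250\log n)$, $\delta(G')\ge d(G')/2$ from the single-vertex deletion, $\Delta(G')\le 6e^{1/5}d(G')\le 16\delta(G')$, and $1-2^{-\epsilon}>\epsilon/2$ holds for $\epsilon\in(0,1)$ by concavity of $1-2^{-\epsilon}-\epsilon/2$ with equal endpoints at $0$ and $1$.
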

\begin{proof}
First, let $G_0$ be a bipartite subgraph $G$ with average degree at least $\frac{d}{3}$.
Then, we apply Lemma~\ref{almostregsubgraph} to $G_0$ in order to find a (bipartite) $6$-almost regular subgraph $G_1 \subseteq G_0$ with $d(G_1) \geq \frac{d}{300 \log n}$. Now let $\lambda \coloneqq \frac{1}{2 \log n}$ and let $d_1 := d(G_1)$.

We now perform a procedure which finds the desired subgraph $G'$ in $G_1$. At every step, we consider a subgraph $H$ and show that either $G' := H$ satisfies the desired properties and we finish the procedure or we will find a certain subgraph $H' \subseteq H$ and continue the procedure with $H'$. We will then show that at some point this procedure must finish. 

Let us now describe a step in this procedure. Consider a subgraph $H$ with average degree $d(H)$. If $H$ has a vertex $v$ with degree less than $d(H)/2$ we remove it, and define $H' := H \setminus v$ and proceed to the next step with $H'$. Note that $H'$ has average degree at least $d(H)$.

\begin{claim}
\label{claimexpcleaning}
If there is a set $U \subseteq V(H)$ with $|U| \leq \frac{|V(H)|}{2}$ such that $e(U,\bar{U}) < \frac{\lambda}{3} d(H)|U|$, then we have either $d(H[\overline{U}]) \geq d(H)$ or $d(H[U]) \geq (1-\lambda)d(H)$. 
\end{claim}
\begin{proof}
Suppose otherwise. Then by the assumption on $e(U,\bar{U})$ we have $e(H) \leq e(H[U]) + e(H[U,\overline{U}]) + e(H[\overline{U}])  < |U| d(H) \left( \frac{1-\lambda}{2} + \frac{\lambda}{3} \right) + |\overline{U}| \frac{d(H)}{2} < |V(H)| \frac{d(H)}{2}$ which is a contradiction.   
\end{proof}
\noindent By Claim~\ref{claimexpcleaning}, if there is any set $U \subseteq V(H)$ with $|U| \leq \frac{|V(H)|}{2}$ such that $e(U,\bar{U}) < \frac{\lambda}{3} d(H)|U|$ then either $d(H[\overline{U}]) \geq d(H)$ or $d(H[U]) \geq (1-\lambda)d(H)$.  If $d(H[\overline{U}]) \geq d(H)$, we define $H' := H[\overline{U}]$ and proceed to the next step with $H'$. On the other hand, if $d(H[U]) \geq (1-\lambda)d(H)$, we define $H' := H[U]$ and proceed to the next step with $H'$. 

Now, note that at any step where the procedure does not terminate, the following always holds: if $|V(H')| \geq |V(H)|/2$ then $d(H') \geq d(H)$; if $|V(H')| < |V(H)|/2$ then $d(H') \geq (1-\lambda)d(H)$. Also, we have $|V(H')| < |V
(H)|$. Furthermore, since $|V(H')| < |V(H)|/2$ can only occur for at most $\log n$ steps, at any step the subgraph $H$ we consider satisfies $d(H) \geq (1 - \lambda \log n)d(G_1) \ge d(G_1)/2 > 0$, and therefore, the procedure must eventually stop with a non-empty subgraph $G'$ and $d(G') \ge d(G_1)/2$. 

We now show that this final subgraph $G'$ satifies the desired properties. Firstly, the discussion above implies that $d(G') \geq \frac{d(G_1)}{2} \geq \frac{d}{600 \log n}$. Secondly, note that since the procedure removes every vertex of low degree, we have $\delta(G') \geq d(G')/2$. Since $G_1$ was $6$-almost regular and $d(G') \geq  d(G_1)/2$ we have that $\Delta(G') \leq 100 \delta(G')$ as desired. Finally, the procedure also implies that every set $U \subseteq V(G')$ of size at most $|V(G')|/2$ satisfies $e(U,\bar{U}) \ge \frac{\lambda}{3} d(G')|U| \ge \frac{1}{10 \log n}d(G')|U|$, so $G'$ is a $\frac{1}{10 \log n}$-expander, as required.
\end{proof}

\subsection{Random walks in expanders}
In this subsection we compute the mixing time of a random walk in an almost regular expander. The notation and results that we cite in this subsection can be found in~\cite{jiang2021rainbow} and~\cite{lovasz1993random}.
    
Let $G$ be a connected graph on the vertex set $[n]$. 
    Consider a random walk on $V(G)$, where we start at some vertex $v_0$ and at the $i$-th step we move from $v_i$ to one of its neighbours, say $v_{i+1}$, where
    each neighbour of $v_i$ is chosen as $v_{i+1}$ with probability $\frac{1}{d(v_i)}$.  Let $M$ be an $n \times n$ matrix defined as follows. Let $M_{v, u}$ be the probability of stepping from $v$ to $u$; so $M_{v, u}= \frac{1}{d(v)}$ if $vu \in E(G)$, and $M_{v, u} = 0$ otherwise. Denote by $D$ the $n \times n$ diagonal matrix with $D_{v, v} = \frac{1}{d(v)}$ for $v \in [n]$, and let $A$ be the adjacency matrix of $G$. Then $M=D A$. 
    So the probability that a random walk starting at vertex $v$ ends in $u$ after $t$ steps is $(M^t)_{v, u}$.

    \begin{definition}
    \label{def19}
        Let the graph $G$ and matrices $M,D,A$ be as above and define $N(G) =D^{1/2}AD^{1/2}$.
        Note that the matrix $N(G)$ is symmetric, so it has $n$ real eigenvalues. Let $\lam_1(N)\geq \lam_2(N)\geq \dots \geq \lam_n(N)$ denote the eigenvalues of $N:=N(G)$. 
    \end{definition}

    \begin{lemma}[Lemma 5.2 in \cite{jiang2021rainbow}] 
    \label{lem:markovchain}
        Let $G$ be a connected $n$-vertex bipartite graph, with the bipartition $\{X, Y\}$ with $m$ edges. 
        Let $M=D(G)A(G)$ and $N = N(G)$.
        Then for every $v,u \in V(G)$ and integer $k \ge 1$, we have
        \begin{equation*}
            \left|(M^k)_{v, u} - \frac{d(u)}{2m}\left(1 + (-1)^{k + \one_{v \in X} + \one_{u \in X}}\right) \right|
            \leq \sqrt{\frac{d(u)}{d(v)}} \cdot \big(\lam_2(N)\big)^k.
        \end{equation*}
    \end{lemma}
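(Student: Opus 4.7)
The plan is to use the similarity between $M = DA$ and the symmetric matrix $N = D^{1/2}AD^{1/2}$, spectrally decompose $N^k$, single out the two extreme eigenpairs which produce the main term, and then control the rest by orthonormality of eigenvectors.

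First I would observe that $D^{-1/2} M D^{1/2} = D^{-1/2}(DA) D^{1/2} = D^{1/2} A D^{1/2} = N$, so $M = D^{1/2}ND^{-1/2}$ and hence $M^k = D^{1/2} N^k D^{-1/2}$. Reading off the $(v,u)$-entry gives
\[
(M^k)_{v,u} = \sqrt{\tfrac{d(u)}{d(v)}}\,(N^k)_{v,u}.
\]
Thus the lemma reduces to showing
\[
\Bigl|(N^k)_{v,u} - \tfrac{\sqrt{d(u)d(v)}}{2m}\bigl(1 + (-1)^{k+\one_{v\in X}+\one_{u\in X}}\bigr)\Bigr| \le \lambda_2(N)^k.
\]

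Next, since $N$ is symmetric there is an orthonormal basis of eigenvectors $\phi_1,\ldots,\phi_n$ with eigenvalues $\lambda_1 \ge \cdots \ge \lambda_n$, giving $(N^k)_{v,u} = \sum_i \lambda_i^k \phi_i(v)\phi_i(u)$. I would then identify the top and bottom eigenpairs explicitly. The vector $\phi_1(v) := \sqrt{d(v)/(2m)}$ is a unit eigenvector of eigenvalue $1$, because $(N\phi_1)_v = \sum_{u\sim v} \frac{1}{\sqrt{d(v)d(u)}}\sqrt{d(u)/(2m)} = \sqrt{d(v)/(2m)}$; since $G$ is connected, $\lambda_1 = 1$ is simple (Perron-Frobenius). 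Because $G$ is bipartite with parts $X,Y$, the vector $\phi_n(v) := \tfrac{\varepsilon_v}{\sqrt{2m}}\sqrt{d(v)}$, with $\varepsilon_v = +1$ on $X$ and $-1$ on $Y$, is a unit eigenvector with eigenvalue $-1$, again simple. The spectrum of $N$ is symmetric about $0$ on bipartite graphs, so for every $2 \le i \le n-1$ we have $|\lambda_i| \le \lambda_2(N)$.

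Splitting off the two extreme terms,
\[
(N^k)_{v,u} = \tfrac{\sqrt{d(v)d(u)}}{2m} + (-1)^k \tfrac{\varepsilon_v \varepsilon_u \sqrt{d(v)d(u)}}{2m} + \sum_{i=2}^{n-1} \lambda_i^k \phi_i(v)\phi_i(u),
\]
and since $\varepsilon_v\varepsilon_u = (-1)^{\one_{v\in X}+\one_{u\in X}}$ (both parities flip when one swaps $X$ for $Y$), the first two terms combine into exactly the main term in the lemma. For the remaining sum I would bound
\[
\Bigl|\sum_{i=2}^{n-1}\lambda_i^k\phi_i(v)\phi_i(u)\Bigr| \le \lambda_2(N)^k \sum_{i=2}^{n-1}|\phi_i(v)||\phi_i(u)| \le \lambda_2(N)^k,
\]
where the last inequality is Cauchy-Schwarz combined with the Parseval identity $\sum_i \phi_i(v)^2 = 1$ (resp.\ for $u$) that follows from orthonormality of the $\phi_i$. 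Multiplying by $\sqrt{d(u)/d(v)}$ yields the claimed bound.

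The only delicate point is the bipartite structure of the spectrum: one must correctly identify that $-1$ is also an eigenvalue with the prescribed $\pm$-eigenvector and that its contribution supplies precisely the $(-1)^{k+\one_{v\in X}+\one_{u\in X}}$ factor, so that the two principal terms do not bleed into the error. Everything else is a routine spectral computation, and the bound $\sum_{i=2}^{n-1}|\phi_i(v)\phi_i(u)| \le 1$ via Cauchy-Schwarz is what gives the clean constant in front of $\lambda_2(N)^k$.
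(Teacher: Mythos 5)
The paper does not prove this lemma itself; it imports it verbatim as Lemma~5.2 of \cite{jiang2021rainbow}, so there is no internal proof to compare against. Your argument is the standard spectral proof of this kind of bipartite mixing estimate and is correct: conjugating $M$ by $D^{\pm 1/2}$ to reduce to the symmetric matrix $N$, peeling off the Perron eigenpair $(1,\phi_1)$ and the bipartite reflection eigenpair $(-1,\phi_n)$ to recover the main term via $\varepsilon_v\varepsilon_u = (-1)^{\one_{v\in X}+\one_{u\in X}}$, and then controlling the tail by Cauchy--Schwarz together with $\sum_i\phi_i(w)^2=1$. One small point worth making explicit: your step ``symmetric spectrum $\Rightarrow |\lambda_i|\le\lambda_2(N)$ for $2\le i\le n-1$'' uses that $\lambda_2(N)\ge 0$, which follows (for $n\ge 3$) from $\lambda_2\ge\lambda_{n-1}=-\lambda_2$; for the degenerate $n=2$ case the inner sum is empty but $\lambda_2(N)=-1$, so the inequality as stated is vacuously violated for odd $k$ — irrelevant for the application, but worth noting if one wants the lemma to be literally true without a size hypothesis.
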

    
\noindent Note that Lemma~\ref{lem:markovchain} says that when $k$ is even and both $v,u$ are in the same part or when $k$ is odd and $v,u$ are in different parts then   $ \left|(M^k)_{v, u} - \frac{d(u)}{m} \right|
            \leq \sqrt{\frac{d(u)}{d(v)}} \cdot \big(\lam_2(N)\big)^k.$ Also observe that when $k$ is even and $v$ and $u$ are in different parts or when $k$ is odd and $v$ and $u$ are in the same part then $(M^k)_{v, u}=0$. 

 \begin{definition}[Conductance] \label{def:conductance}
        For a graph $G$ with $m$ edges, let $\pi(v)= \frac{d(v)}{2m}$, and for any $S \subseteq V(G)$,  let $\pi(S) \coloneqq \sum_{s\in S}{\pi(s)}$; observe that $\pi(S) \le 1$ for every $S \subseteq V(G)$. Define the \emph{conductance} of a set $S$, denoted by $\Phi(S)$, as
        \begin{equation*}
            \Phi(S) \coloneqq \frac{e(S,\overline{S})}{2m \cdot \pi(S)\pi(\overline{S})}, 
        \end{equation*}
        and let the \emph{conductance} of a graph $G$, denoted by $\Phi_G$, be defined as
        \begin{equation*}
            \qquad \Phi_G \coloneqq \min_{S \subseteq V(G)} \Phi(S).
        \end{equation*}
    \end{definition}
   
    \begin{theorem}[Theorem 5.3 in \cite{lovasz1993random}] 
\label{Lovasz}\label{thm:upperboundoneigenvalue} 
        Let $G$ be a graph and let $\lam_2 = \lam_2(N(G))$. Then $\lam_2 \le 1 - \frac{\Phi_G^2}{8}$.
    \end{theorem}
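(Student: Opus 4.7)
The plan is to reduce to the classical Cheeger inequality for the normalized adjacency matrix, and then account for the factor-of-two difference between the paper's $\Phi_{G}$ and the standard conductance. First, I would set up the Rayleigh quotient. Since $N$ is symmetric with top eigenvalue $1$ and top eigenvector $u_{1}(v) = \sqrt{d(v)}$ (via $N = D^{-1/2}MD^{1/2}$ and $M\mathbf{1}=\mathbf{1}$), the substitution $x = D^{-1/2}g$ in $1 - \lambda_{2}(N) = \min_{x \perp u_{1}} x^{T}(I-N)x/x^{T}x$ unpacks into
\[
1 - \lambda_{2}(N) \;=\; \min_{g \,:\, \sum_{v} d(v) g(v) = 0}\; \frac{\sum_{uv \in E(G)} (g(u) - g(v))^{2}}{\sum_{v} d(v)\, g(v)^{2}}.
\]
Let $g$ be an optimizer.

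Next, I would execute the classical Cheeger sweep on $g$. Shifting $g$ by its $\pi$-weighted median $m$ only enlarges the denominator (since the $\pi$-mean, not the median, minimises that quadratic in the shift constant), so $\widetilde{g} := g - m$ still has Rayleigh quotient at most $1 - \lambda_{2}(N)$; and by the definition of the median both $\widetilde{g}_{+}$ and $\widetilde{g}_{-}$ are supported on sets of $\pi$-mass at most $1/2$. The pointwise inequality $(\widetilde{g}(u)-\widetilde{g}(v))^{2} \ge (\widetilde{g}_{+}(u)-\widetilde{g}_{+}(v))^{2} + (\widetilde{g}_{-}(u)-\widetilde{g}_{-}(v))^{2}$ and the mediant inequality then let me pick $f \in \{\widetilde{g}_{+}, \widetilde{g}_{-}\}$ whose Rayleigh quotient is also at most $1 - \lambda_{2}(N)$. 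The Cauchy--Schwarz inequality
\[
\sum_{uv \in E}\!|f(u)^{2}-f(v)^{2}| \;\le\; \Bigl(\sum_{uv \in E}(f(u)-f(v))^{2}\Bigr)^{\!1/2}\!\Bigl(\sum_{uv \in E}(f(u)+f(v))^{2}\Bigr)^{\!1/2},
\]
combined with $(f(u)+f(v))^{2} \le 2(f(u)^{2}+f(v)^{2})$ and the coarea identities $\int_{0}^{\infty} e(S_{t},\overline{S_{t}})\,dt = \sum_{uv} |f(u)^{2}-f(v)^{2}|$ and $\int_{0}^{\infty} \mathrm{vol}(S_{t})\,dt = \sum_{v} d(v) f(v)^{2}$ for the sweep sets $S_{t} := \{v : f(v)^{2} > t\}$, yields by averaging some $t>0$ with $\pi(S_{t}) \le 1/2$ and $e(S_{t},\overline{S_{t}})/\mathrm{vol}(S_{t}) \le \sqrt{2(1-\lambda_{2}(N))}$. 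This is the classical conclusion $1 - \lambda_{2}(N) \ge \Phi_{\mathrm{std}}^{2}/2$, where $\Phi_{\mathrm{std}} := \min_{\pi(S)\le 1/2} e(S,\overline{S})/\mathrm{vol}(S)$.

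Finally, I would convert to the paper's $\Phi_{G}$. Unpacking its definition gives $\Phi(S) = e(S,\overline{S})/(\mathrm{vol}(S)\,\pi(\overline{S}))$, and since $\Phi(S)=\Phi(\overline{S})$ we may assume $\pi(S) \le 1/2$, whence $\pi(\overline{S}) \ge 1/2$ and $\Phi(S) \le 2\,\Phi_{\mathrm{std}}(S)$. Therefore $\Phi_{G} \le 2\,\Phi_{\mathrm{std},G}$, and
\[
1 - \lambda_{2}(N) \;\ge\; \tfrac{1}{2}\,\Phi_{\mathrm{std},G}^{2} \;\ge\; \tfrac{1}{2}\,(\Phi_{G}/2)^{2} \;=\; \Phi_{G}^{2}/8,
\]
which is exactly the claim. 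The main obstacle is the Cheeger sweep itself: extracting, from an eigenfunction, a single level set whose conductance is controlled by the Rayleigh quotient. The crucial design choices are the asymmetric median truncation (which forces the support of $f$ to have $\pi$-mass at most $1/2$) and sweeping on $f^{2}$ rather than $f$, which is precisely what makes the Cauchy--Schwarz step match the denominator $\sum_{v} d(v) f(v)^{2}$.
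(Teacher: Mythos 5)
The paper does not prove this statement at all --- it simply cites it as Theorem 5.3 of Lov\'asz's survey \cite{lovasz1993random}. Your argument is a correct, self-contained derivation via the standard Cheeger (conductance) inequality: the Rayleigh-quotient form of $1-\lambda_2(N)$ under $x = D^{-1/2}g$ is computed correctly; the median shift only enlarges the $\pi$-weighted denominator (as the $\pi$-mean minimises it), so it preserves the bound while ensuring both truncations have support of $\pi$-mass at most $1/2$; the pointwise inequality together with the mediant inequality correctly isolates one nonnegative piece $f$ with small Rayleigh quotient; and the Cauchy--Schwarz/coarea sweep on $f^2$ produces a level set $S_t$ with $\pi(S_t)\le 1/2$ and $e(S_t,\overline{S_t})/\mathrm{vol}(S_t) \le \sqrt{2(1-\lambda_2(N))}$. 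The final conversion from the usual conductance $e(S,\overline{S})/\mathrm{vol}(S)$ to the paper's $\Phi(S) = e(S,\overline{S})/\bigl(2m\,\pi(S)\pi(\overline{S})\bigr) = e(S,\overline{S})/\bigl(\mathrm{vol}(S)\,\pi(\overline{S})\bigr)$ is also handled correctly, and losing the factor of $2$ there is precisely why the constant becomes $1/8$ rather than the classical $1/2$. This is the same underlying route as the proof in the cited reference, so your proposal is a faithful reconstruction of the uncited argument rather than a genuinely different one.
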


\begin{lemma}
\label{boundingconductance}
  Let $\lambda > 0$, $K \ge 1$, and let $G$ be a $K$-almost-regular $\lambda$-expander. Then $\Phi_G \ge \frac{\lambda}{K}$.
\end{lemma}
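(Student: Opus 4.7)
My plan is to unpack the definitions and push the bounds through directly; there is no real obstacle, only a careful bookkeeping of which inequalities to use on the numerator versus the denominator of $\Phi(S)$.

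First, I would observe that the conductance ratio is symmetric under complementation: since $e(S,\overline{S}) = e(\overline{S}, S)$ and $\pi(S)\pi(\overline{S}) = \pi(\overline{S})\pi(S)$, we have $\Phi(S) = \Phi(\overline{S})$. Hence in computing $\Phi_G$ it suffices to minimize over sets $S$ with $|S| \le |V(G)|/2$, which is exactly the regime where the $\lambda$-expander hypothesis gives information.

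Next, for such an $S$ I would bound the numerator and denominator of $\Phi(S) = \frac{e(S,\overline{S})}{2m\cdot \pi(S)\pi(\overline{S})}$ separately. For the numerator, the expansion hypothesis yields $e(S,\overline{S}) \ge \lambda\, d(G)\,|S|$. For the denominator, I would use the trivial bound $\pi(\overline{S}) \le 1$ together with $2m\,\pi(S) = \sum_{v\in S} d(v) \le \Delta(G)\,|S|$, giving $2m\,\pi(S)\pi(\overline{S}) \le \Delta(G)\,|S|$. Combining these two estimates,
\[
\Phi(S) \;\ge\; \frac{\lambda\, d(G)\,|S|}{\Delta(G)\,|S|} \;=\; \frac{\lambda\, d(G)}{\Delta(G)}.
\]

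Finally, I would invoke $K$-almost-regularity to compare $d(G)$ and $\Delta(G)$: since $\Delta(G) \le K\,\delta(G) \le K\,d(G)$, we have $d(G)/\Delta(G) \ge 1/K$, and therefore $\Phi(S) \ge \lambda/K$ for every $S$ with $|S| \le |V(G)|/2$. By the symmetry noted above, this holds for all $S$, proving $\Phi_G \ge \lambda/K$. The argument is short enough that I would write it out in full rather than sketching, and no single step is harder than the others.
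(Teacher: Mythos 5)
Your proof is correct and follows essentially the same route as the paper's: exploit the symmetry $\Phi(S)=\Phi(\overline{S})$ to reduce to $|S|\le |V(G)|/2$, lower-bound the numerator via the expansion hypothesis, upper-bound $2m\,\pi(S)$ by $\Delta(G)|S|$ (with $\pi(\overline{S})\le 1$), and finish with $K$-almost-regularity. The only cosmetic difference is that you phrase the degree comparison as $d(G)/\Delta(G)\ge 1/K$ (via $\Delta(G)\le K\delta(G)\le K d(G)$) rather than first replacing $d(G)$ and $\Delta(G)$ by $\delta(G)$ and $K\delta(G)$, but this is the same estimate.
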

\begin{proof}
  Suppose all the vertices in $G$ have their degrees between $d$ and $K d$.
  Let $S\subseteq V(G)$. Since $\Phi(S)=\Phi(\overline{S})$, we may assume that
        $|S|\leq \frac{n}{2}$. Since $G$ is a $\lambda$-expander, we have 
        $e(S, \overline{S})\geq \lambda d(G) |S| \ge \lambda d |S|$.
        Note that $\sum_{v\in S} d(v) \le K d |S|$. 
        Also, observe that $\pi(\overline{S})\leq 1$. Hence
   
        \begin{align*}
            \Phi(S)& =\frac{e(S, \overline{S})}{2e(G)\pi(S)\pi(\overline{S})} 
            \geq \frac{e(S,\overline{S})}{\sum_{v\in S} d(v)}
            \geq  \frac{ \lambda d|S| }{K d|S|}\geq \frac{\lambda}{K}.
        \end{align*}
        The above inequality thus implies $\Phi_G \ge \frac{\lambda}{K}$.
\end{proof}

\noindent Combining Lemma~\ref{boundingconductance} and Theorem~\ref{Lovasz}, we obtain that if $G$ is a $K$-almost-regular $\lambda$-expander and $\lam_2 = \lam_2(N(G))$, then $\lambda_2 \le 1 - \frac{1}{8}(\frac{\lambda}{K})^2$. Therefore, Lemma~\ref{lem:markovchain} implies the following. 

\begin{corollary}\label{cor:mixingtime}
     Let $\lambda > 0$, $K \ge 1$, and let $G$ be a bipartite graph on $n$ vertices which is a $K$-almost-regular $\lambda$-expander. Let $\{X, Y\}$ be the bipartition of $G$ with $m$ edges and no isolated vertices. 
        Let $M=D(G)A(G)$ and $N = N(G)$.
        Then for every $v,u \in V(G)$ and integer $k \ge 1$, the probability $(M^k)_{v, u}$ that a random walk starting at vertex $v$ ends in $u$ after $k$ steps satisfies
        \begin{equation*}
            \left|(M^k)_{v, u} - \frac{d(u)}{2m}\left(1 + (-1)^{k + \one_{v \in X} + \one_{u \in X}}\right) \right|
            \leq \sqrt{K} \cdot \left(1 - \frac{1}{8}\left(\frac{\lambda}{K}\right)^2\right)^k.
        \end{equation*}
\end{corollary}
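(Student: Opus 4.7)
The statement is essentially a direct consequence of Lemma~\ref{lem:markovchain}, Lemma~\ref{boundingconductance}, and Theorem~\ref{Lovasz}, so the plan is just to assemble them correctly. First, I would observe that the right-hand side of Lemma~\ref{lem:markovchain} contains two quantities that must be controlled: the spectral quantity $\lambda_2(N)^k$ and the degree ratio $\sqrt{d(u)/d(v)}$. The goal is to replace each of these with quantities that only depend on the parameters $K$ and $\lambda$ given in the hypothesis.

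Next, I would deal with the degree ratio. Since $G$ is $K$-almost-regular, every vertex $w$ satisfies $\delta(G) \le d(w) \le K\,\delta(G)$, and therefore
\[
\sqrt{\frac{d(u)}{d(v)}} \le \sqrt{K}
\]
for any two vertices $u,v \in V(G)$ with no isolated vertices. This handles the multiplicative prefactor in Lemma~\ref{lem:markovchain}.

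For the spectral factor, I would invoke Lemma~\ref{boundingconductance} to conclude that the conductance satisfies $\Phi_G \ge \lambda/K$, and then feed this into Theorem~\ref{Lovasz} to obtain
\[
\lambda_2(N) \;\le\; 1 - \frac{\Phi_G^{\,2}}{8} \;\le\; 1 - \frac{1}{8}\left(\frac{\lambda}{K}\right)^{2}.
\]
Raising this to the $k$-th power (noting the right-hand side is nonnegative for all relevant $\lambda, K$) gives the needed bound on $\lambda_2(N)^k$.

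Finally, I would combine the two estimates above with Lemma~\ref{lem:markovchain} to conclude the desired inequality. There is essentially no obstacle here, as every step is a direct substitution; the only thing to verify is that the hypotheses of Lemma~\ref{lem:markovchain} are satisfied, namely that $G$ is connected and bipartite. Bipartiteness is assumed, and connectedness follows from the fact that $G$ is a $\lambda$-expander with no isolated vertices (any nonempty proper subset has positive edge boundary). Thus Lemma~\ref{lem:markovchain} applies and yields exactly the stated corollary.
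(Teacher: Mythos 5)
Your proof is correct and follows essentially the same route as the paper: bound $\sqrt{d(u)/d(v)}$ by $\sqrt{K}$ using $K$-almost-regularity, bound $\lambda_2(N)$ via Lemma~\ref{boundingconductance} and Theorem~\ref{Lovasz}, and substitute into Lemma~\ref{lem:markovchain}. Your additional remark that expansion (plus no isolated vertices) forces connectedness is a sensible verification of the hypotheses that the paper leaves implicit.
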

\noindent
We shall utilize the following definition of \emph{mixing time} in bipartite graphs.
\begin{definition}[Mixing time]
    Let $G$ be a bipartite graph on $n$ vertices.
    Let $\{X, Y\}$ be the bipartition of $G$ with $m$ edges and no isolated vertices. We say that $G$ has \emph{mixing time $k$} if
        for every $u,v \in G$, the probability $(M^k)_{v, u}$ that a random walk starting at vertex $v$ ends in $u$ after $k$ steps satisfies
        \begin{equation*}
            \left|(M^k)_{v, u} - \frac{d(u)}{2m}\left(1 + (-1)^{k + \one_{v \in X} + \one_{u \in X}}\right) \right|
            \leq \frac{1}{n^2}.
        \end{equation*}
\end{definition}
\noindent The following is a corollary of the previous statements and succinctly summarizes a few pertinent properties of mixing time that are essential for our proofs.
\begin{corollary}\label{cor:mixing time for sets}
Let $K \ge 1$, let $G$ be a connected $K$-almost-regular bipartite graph on $n$ vertices with mixing time $k$ and suppose $n$ is large enough. Let $\{X, Y\}$ be the bipartition of $G$. Then the following holds:
\begin{itemize}
    \item[$\mathrm{(i)}$] 
For any given set $S \subseteq V(G)$, the probability that a random walk starting at a given vertex ends in a vertex of $S$ after at least $k$ steps is at most $\frac{4K}{n}|S|$.
\item [$\mathrm{(ii)}$]  If $k'$ is even (if $k'$ is odd), then the probability that a random walk starting at a vertex of $X$ ends in any given vertex of $X$ (of $Y$ respectively) after $k' \ge k$ steps is at least $\frac{1}{K n}.$ 
\item [$\mathrm{(iii)}$] 
If $G$ is a $\lambda$-expander for some $\lambda > 0$, then it has mixing time $k \leq \frac{30 K^2}{\lambda^2}  \log n$.
\end{itemize}
\end{corollary}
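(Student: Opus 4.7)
The plan is to prove the three parts in turn. For parts (i) and (ii) the unifying trick is to write, for any $k' \ge k$,
\[
(M^{k'})_{v,u} = \sum_w (M^{k'-k})_{v,w}\,(M^k)_{w,u},
\]
and apply the mixing-time bound only inside the $k$-step factor on the right. This way, entries at step $k'$ inherit pointwise $\ell_\infty$-type control from the entries at step $k$, without requiring a separate argument that closeness to stationarity persists for all $k' \ge k$.

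For part (i), the mixing-time definition yields the pointwise upper bound $(M^k)_{w,u} \le \frac{d(u)}{m} + \frac{1}{n^2}$, since the parity-adjusted value $\frac{d(u)}{2m}\bigl(1 + (-1)^{k+\one_{w\in X}+\one_{u\in X}}\bigr)$ is at most $\frac{d(u)}{m}$. Substituting into the identity above and using $\sum_w (M^{k'-k})_{v,w} = 1$ propagates this bound to step $k'$. I would then sum over $u \in S$ and use $K$-almost-regularity to bound $\frac{d(u)}{m} \le \frac{2K}{n}$, using $2m \ge n\delta(G)$ and $d(u) \le K\delta(G)$. The resulting estimate $|S|(2K/n + 1/n^2)$ is at most $4K|S|/n$ for $n$ large.

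For part (ii), I use the same identity but now with the \emph{lower} bound $(M^k)_{w,u} \ge \frac{d(u)}{m} - \frac{1}{n^2}$, valid at those $w$ for which $k + \one_{w\in X} + \one_{u\in X}$ is even; at the remaining $w$, $(M^k)_{w,u} = 0$ by bipartiteness and is simply dropped from the sum. The key observation is that a random walk of length $k'-k$ starting at $v \in X$ is supported entirely on one part of $V(G)$, determined by the parity of $k'-k$; a short case check on whether $k'$ is even or odd (together with the specified location of $u$) shows that this is precisely the set of $w$ with the "good" parity above. Hence $\sum_{w\text{ good}}(M^{k'-k})_{v,w}=1$, giving $(M^{k'})_{v,u} \ge \frac{d(u)}{m} - \frac{1}{n^2}$. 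Using $m \le nK\delta(G)/2$ we have $\frac{d(u)}{m} \ge \frac{2}{Kn}$, and hence $(M^{k'})_{v,u} \ge \frac{1}{Kn}$ whenever $n$ is large in terms of $K$.

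For part (iii), I would combine Lemma~\ref{boundingconductance} with Theorem~\ref{thm:upperboundoneigenvalue} to obtain $\lambda_2(N(G)) \le 1 - \tfrac{1}{8}(\lambda/K)^2$, plug this into Lemma~\ref{lem:markovchain} to bound the left-hand side of the mixing-time inequality by $\sqrt{K}\bigl(1 - \tfrac{1}{8}(\lambda/K)^2\bigr)^k$, and then use $(1-x)^k \le e^{-kx}$ to see that this is at most $1/n^2$ as soon as $k \ge c(K/\lambda)^2 \log n$ for an absolute constant $c$; routine constant-chasing delivers the claimed $30K^2\log n/\lambda^2$. The only delicate point in the whole proof is the parity bookkeeping in (ii); the decomposition $M^{k'} = M^{k'-k}M^k$ is what makes it possible to handle (i) and (ii) without imposing an expander assumption, and so sidesteps what would otherwise be the main obstacle.
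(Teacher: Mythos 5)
Your proof is correct, and the key building blocks are the same as the paper's (bounding $\tfrac{d(u)}{m}$ between $\tfrac{2}{Kn}$ and $\tfrac{2K}{n}$ via almost-regularity, plugging conductance into the Lov\'asz spectral bound for part (iii)). Where you go beyond the paper is in parts (i) and (ii): the paper's definition of mixing time only bounds $(M^k)_{v,u}$ at step exactly $k$, and its proof only addresses that case, leaving the ``after at least $k$ steps'' claim implicit. Your Chapman--Kolmogorov decomposition $M^{k'} = M^{k'-k}M^k$ is precisely what makes the extension to $k' \ge k$ legitimate for general graphs with a given mixing time, rather than only for expanders (where one could instead invoke monotonicity of the bound in Corollary~\ref{cor:mixingtime}). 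The parity bookkeeping in your part (ii) -- verifying that a walk of length $k'-k$ from $X$ lands only on vertices $w$ with $k + \one_{w\in X} + \one_{u\in X}$ even in each of the four cases -- is correct and is the one nontrivial point the paper glosses over. So your argument is both a valid proof and a slightly more careful one than what appears in the paper.
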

\begin{proof}
 Suppose $G$ has $m$ edges. Since $G$ is $K$-almost-regular and $k$ is its mixing time, the probability that a random walk starting at a given vertex ends in a vertex of $S$ after $k$ steps is at most $$\sum_{u \in S} \left(\frac{d(u)}{m} + \frac{1}{n^2}\right) \le \frac{4K}{n} |S|.$$
Similarly, the required probability in $\mathrm{(ii)}$ is at least $$\frac{d(u)}{m} - \frac{1}{n^2} \ge \frac{1}{K n}.$$
Finally, by Corollary~\ref{cor:mixingtime}, if $G$ is a $\lambda$-expander, then it has mixing time at most $\frac{30 K^2}{\lambda^2} \log n$.
\end{proof}

\section{Proof}
As mentioned earlier, our strategy for proving Theorem \ref{thm:main} is to first pass to an almost-regular expander (using Lemma~\ref{findingalmostregexpander}).

In Section~\ref{sec:findingstarforests}, we show that one can find a collection of star-forests in almost-regular graphs. In Section~\ref{sec:randomwalkonsets}, we prove a concentration inequality that allows us to show that a random walk must contain many vertices from any large enough set with high probability. Using this result and the star-forests that we found, we show that the random walk must contain many chords and that it can be closed into a cycle with high enough probability in Section~\ref{sec:chordsinwalks}. In Section~\ref{sec:self-avodingwalks}, we compute the probability that a random walk is self-avoiding, and we put everything together and prove Theorem~\ref{thm:main} in Section~\ref{sec:puttingeverythingtogether}.

\subsection{Finding star forests in an almost-regular graph}
\label{sec:findingstarforests}
\noindent Given disjoint sets $A$ and $B$, an \emph{$AB$-star-forest} $F$ is a set of vertex-disjoint stars such that the root of each of the stars is in $A$ and the leaves are in $B$. Two star forests $F$ and $F'$ are called \emph{root-disjoint} if the set of root vertices of the stars in $F$ is disjoint from the set of root vertices of the stars in $F'$. 

\begin{lemma}
\label{first_star_forest}
Let $G = (A,B)$ be a 1000-almost-regular bipartite graph on $n$ vertices, and let $d \coloneqq \delta(G) \ge 10^6$. Then, there exists an $AB$-star-forest $F \subseteq G$ consisting of $\frac{n}{100 d}$ stars of size $\frac{d}{10^6}$. 
\end{lemma}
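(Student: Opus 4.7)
The plan is to construct $F$ greedily, one star at a time. Set $s := \frac{n}{100d}$ for the number of stars needed and $t := \frac{d}{10^6}$ for the number of leaves per star. First I would record two rough size bounds that follow from almost-regularity: since every degree lies in $[d, 1000d]$ and $\sum_{v \in A}d(v) = \sum_{v \in B}d(v) = e(G)$, we have $|A|,|B| \ge n/1001$. In particular, because $d \ge 10^6$, both sides of the bipartition are much larger than $s$, so there is plenty of room to place roots and leaves.

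The construction proceeds as follows. Maintain a current root set $R \subseteq A$ and current leaf set $L \subseteq B$, initially both empty. While $|R| < s$, pick any $v \in A\setminus R$ with $|N(v)\setminus L| \ge t$, declare $v$ a new root, and add any $t$ of its free neighbours to $L$. If this process finishes, the stars obtained are vertex-disjoint by construction (roots are chosen from $A \setminus R$, leaves from $B \setminus L$), and we have produced the desired $AB$-star-forest.

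The key step, which is the only real content of the proof, is showing that the procedure never gets stuck before reaching $|R| = s$. Suppose for contradiction that at some point $|R| < s$ but every $v \in A \setminus R$ satisfies $|N(v) \cap L| > d(v) - t$. Counting edges between $A \setminus R$ and $L$ from the $A$-side gives strictly more than $(d - t)|A \setminus R|$, while counting from the $L$-side gives at most $\Delta(G)\cdot|L| \le 1000d \cdot |L|$. Since $|L| \le s \cdot t = \frac{n}{10^8}$ and $|A \setminus R| \ge |A| - s \ge \frac{n}{1001} - \frac{n}{100d} \ge \frac{n}{2000}$ (using $d \ge 10^6$), this yields
\[
(d - t)\cdot \tfrac{n}{2000} \;<\; 1000 d \cdot \tfrac{n}{10^8} \;=\; \tfrac{nd}{10^5},
\]
which is false because the left-hand side is at least $\tfrac{nd}{4000}$.

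The main (minor) obstacle is simply tracking the constants; no probabilistic or structural argument beyond a clean edge count is needed, since the almost-regularity alone controls how many edges the already-used leaves $L$ can absorb.
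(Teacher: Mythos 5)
Your proposal is correct and takes essentially the same approach as the paper: both take a maximal (or greedily built) star-forest and derive a contradiction from double-counting the edges between the unused roots and the already-consumed leaves, using $1000$-almost-regularity to bound the $B$-side degrees. The paper phrases the final step as exhibiting a vertex of $B''$ with degree exceeding $1000d$, while you compare the two edge counts directly, but these are the same calculation.
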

\begin{proof}
 Note that since $G$ is 1000-almost-regular, by double counting the edges we have 
$|B| \leq 1000|A|$. Then $n = |A| + |B| \le 1001 |A|$, and so $|A| \ge n/1001$. Let $F$ be a maximal $AB$-star-forest $F \subseteq G$ consisting of stars of size $d/10^6$ and for the sake of contradiction assume that $F$ contains less than $\frac{n}{100d}$ stars. Consider $A' := A \setminus V(F), A'' := A \cap V(F)$ and $B':= B \setminus V(F)$, $B'':= B \cap V(F)$. By assumption, note that $|A''|<n/100d \le n/10^5 \le |A|/4$ and $|B''|\leq n/10^8$. Every vertex in $A'$ must have less than $d/10^6$ neighbours in $B'$, as otherwise $F$ would not be maximal.
Hence $e(A',B'')\geq |A'| (1-10^{-6})d \ge |A|d/4$. Therefore there is a vertex in $B''$ with degree at least $$\frac{e(A',B'')}{|B''|}\geq \frac{|A|d}{4|B''|}\geq  \frac{|A|d}{4n/10^8}> 1000 d$$ 
a contradiction with the fact that $G$ is 1000-almost-regular.
\end{proof}
\noindent Repeated application of the lemma above produces a collection of root-disjoint $AB$-star forests. This is shown by the following corollary.
\begin{corollary}\label{cor:star forests}
Let $G = (A,B)$ be a 100-almost-regular bipartite graph on $n$ vertices, and let $d:=\delta(G) \ge 10^7$. Then, there exist $\frac{d}{10}$ root-disjoint $AB$-star-forests $F_1, F_2, \ldots, F_{d/10}$ in $G$, where each $F_i$ consists of $\frac{n}{10^5 d}$ stars of size $\frac{d}{10^7}$.
\end{corollary}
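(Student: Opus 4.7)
The plan is to construct the forests $F_1,\ldots,F_{d/10}$ iteratively, producing $F_i$ by replaying the maximality argument from Lemma~\ref{first_star_forest} inside the bipartite graph $G_i$ obtained from $G$ by deleting all previously used roots from $A$. I do not invoke Lemma~\ref{first_star_forest} as a black box, because deleting roots from $A$ may cause some $B$-vertices to lose many of their edges, destroying almost-regularity. What survives, however, is everything actually used in that proof: a uniform lower bound on the degrees of the root candidates, and a uniform upper bound on the maximum degree of the whole graph. That is enough to re-execute the counting step directly in $G_i$.

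After $F_1,\ldots,F_{i-1}$ have been chosen, let $R_{i-1}\subseteq A$ be the union of their root sets, so $|R_{i-1}| \le (i-1)\cdot \frac{n}{10^5 d} \le \frac{n}{10^6}$. Put $G_i := G[A\setminus R_{i-1},B]$ and note three facts: (i) $|A\setminus R_{i-1}| \ge \frac{n}{200}$, since $|A|\ge \frac{n}{101}$ by 100-almost-regularity; (ii) every $a\in A\setminus R_{i-1}$ satisfies $d_{G_i}(a) = d_G(a) \ge d$; and (iii) every vertex of $G_i$ has degree at most $100 d$, inherited from $G$.

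Choose $F_i$ to be any maximal $AB$-star forest in $G_i$ all of whose stars have size exactly $\frac{d}{10^7}$ and whose roots lie in $A\setminus R_{i-1}$. I claim $|F_i|\ge \frac{n}{10^5 d}$. Suppose for contradiction $|F_i|<\frac{n}{10^5 d}$. Write $A''$ and $B''$ for the roots and leaves of $F_i$; then $|A''|<\frac{n}{10^5 d}$ and $|B''|<\frac{n}{10^{12}}$. Setting $A' := (A\setminus R_{i-1})\setminus A''$, fact~(i) gives $|A'|\ge \frac{n}{250}$. Maximality forces each $a\in A'$ to have fewer than $\frac{d}{10^7}$ neighbours in $B\setminus V(F_i)$, hence, by~(ii), at least $d-\frac{d}{10^7} \ge \frac{9d}{10}$ neighbours in $B''$. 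Therefore $e_{G_i}(A',B'')\ge \frac{nd}{300}$, and by averaging some $b\in B''$ has degree in $G_i$ at least $\frac{nd/300}{n/10^{12}} > 10^{9} d$, which blatantly violates~(iii).

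Trimming $F_i$ down to exactly $\frac{n}{10^5 d}$ stars (each still of size $\frac{d}{10^7}$) now yields the desired $i$-th forest, and since its roots lie in $A\setminus R_{i-1}$ by construction, the whole collection $F_1,\ldots,F_{d/10}$ is root-disjoint. The only real obstacle, flagged above, is the loss of almost-regularity of $G_i$ after root deletion, which forces us to re-prove (rather than cite) the counting step of Lemma~\ref{first_star_forest} in $G_i$; the max-degree bound passed down from $G$ together with the untouched degrees on the root side make this routine.
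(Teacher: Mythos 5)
Your proposal is correct, and it follows the same high-level iterative scheme as the paper (remove the roots of the previously chosen forests, then find the next one), but the two arguments diverge in how they carry out the induction step. The paper deletes the used roots, observes that only $O(nd/10^4)$ edges are lost, then repeatedly strips vertices of degree below $d/4$ so as to restore (1000-)almost-regularity, and finally invokes Lemma~\ref{first_star_forest} as a black box on the cleaned graph. You instead skip the re-cleaning entirely and re-run the counting argument of Lemma~\ref{first_star_forest} directly in $G_i = G[A\setminus R_{i-1},B]$, after correctly observing that deleting roots on the $A$-side preserves exactly the two quantities that counting argument actually uses: the full degree $d_{G}(a)\ge d$ of every surviving $A$-vertex, and the global bound $\Delta(G_i)\le 100d$. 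This is a genuine and sensible departure: it avoids having to re-establish almost-regularity and to translate the lemma's conclusion (which is stated in terms of $|V(G'')|$ and $\delta(G'')$) back into the original parameters $n$ and $d$. Your arithmetic — $|R_{i-1}|\le n/10^6$, $|A\setminus R_{i-1}|\ge n/200$, $|A'|\ge n/250$, $|B''|< n/10^{12}$, the maximality argument forcing $\ge 9d/10$ neighbours of each $a\in A'$ inside $B''$, and the resulting average-degree contradiction $>10^9 d > 100d$ — all checks out.
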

\begin{proof}
Suppose we have already found the desired $AB$-star-forests $F_1, F_2, \ldots, F_{i}$ for $i < \frac{d}{10}$, we find the next one $F_{i+1}$ as follows. We remove the root vertices of the stars in $F_1, F_2, \ldots, F_{i}$ from $A \subseteq V(G)$.
This removes at most $i \cdot \frac{n}{10^5 d} \cdot 100 d < \frac{nd}{10^4}$ edges from $G$ as $\Delta(G) \le 100d$ (since $G$ is 100-almost-regular), let the resulting graph be $G'$. Hence, $G'$ still has at least $(\frac{1}{2} - 10^{-4}) nd$ edges, so by repeatedly removing vertices of degree less than $\frac{d}{4}$, we obtain a subgraph $G''$ of $G'$ with minimum degree $\frac{d}{4} \ge 10^6$, while its maximum degree is still at most $100 d$, so $G''$ is 1000-almost-regular. Moreover, $G''$ contains at least $(\frac{1}{2} - 10^{-4}) nd-\frac{nd}{4} \ge (\frac{1}{4} - 10^{-4})nd$ edges, so $G''$ has at least $\frac{2(\frac{1}{4} - 10^{-4})nd}{100 d} \ge \frac{n}{250}$ vertices (as $\Delta(G'') \le 100 d$). So by Lemma~\ref{first_star_forest}, $G''$ has an $AB$-star-forest $F_{i+1}$ consisting of $\frac{n}{10^5 d}$ stars of size $\frac{d}{10^7}$, as desired. 
\end{proof}

\subsection{Intersection of random walks with arbitrary sets}
\label{sec:randomwalkonsets}
\noindent
For a random walk $W=\{X_i\}$ on a graph $G$ with mixing time $k$, the set of vertices $\{X_{ik}: i\in [t]\}$ for some $t\geq 1$ behaves almost like a random set of $t$ vertices chosen uniformly at random with repetition from $G$. We exploit this fact in this subsection.

More precisely, let $G$ be a $100$-almost-regular bipartite graph on $n$ vertices with parts $A,B$ and consider a random walk $R$ of length $t$ starting at some vertex $v_0 \in A$ and take a $k' \in \{k,k+1\}$ which is odd, where $k$ is the mixing time of $G$. Let $S$ be a random set obtained by the following procedure which consists of $\lfloor t/k' \rfloor$ steps:  
\begin{itemize}
    \item In each step $1 \leq i \leq \lfloor t/k' \rfloor$, with probability $10^{-5}$  we either choose a uniformly random vertex $v_i$ from $A$ (if $i$ is even) or from $B$ (if $i$ is odd), or we do nothing (with probability $1 - 10^{-5}$).
    \item The set $S = \{v_i : 1 \leq i \leq \lfloor t/k' \rfloor\}$ consists of all of the chosen vertices.
\end{itemize}
\noindent 
Now, consider the set of vertices $R(k) \coloneqq \{u_1,u_2, \ldots \}$ where for each $1 \leq i \leq \lfloor t/k' \rfloor$, $u_i$ is the $ik'$-th vertex of the random walk $R$. We then have the following property given by the definition of mixing time and Corollary~\ref{cor:mixing time for sets}.
\begin{obs}    
    Conditioning on the choice of $u_0,u_1, \ldots, u_{i-1}$,
    we have that for every $a\in A$ and $b\in B$ it holds that:
    \begin{itemize}
    \item  If $i$ is even, then $\mathbf P[u_i=a]\geq \frac{1}{100 n} \ge \frac{10^{-5}}{|A|}$ (since $n = |A| + |B| \le 101 |A|$ as $G$ is 100-almost-regular)
    \item If $i$ is odd, then $\mathbf P[u_i=b]\geq \frac{10^{-5}}{|B|}$. 
    \end{itemize}
\end{obs}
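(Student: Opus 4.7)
The observation follows directly from Corollary~\ref{cor:mixing time for sets}(ii) combined with the Markov property of the random walk. First, I would invoke the Markov property to note that conditioning on $u_0, u_1, \ldots, u_{i-1}$ is equivalent to conditioning only on $u_{i-1}$, since the segment of $R$ producing $u_i$ consists of $k'$ fresh steps starting at $u_{i-1}$. Hence $\mathbf{P}[u_i = w \mid u_0, \ldots, u_{i-1}] = (M^{k'})_{u_{i-1}, w}$ for any target vertex $w$, and it suffices to lower bound this transition probability.

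The main bookkeeping step is to match up parities. Since $u_0 = v_0 \in A$ and $k'$ is odd, each block of $k'$ steps crosses the bipartition, so $u_j \in A$ when $j$ is even and $u_j \in B$ when $j$ is odd. For $i$ even this gives $u_{i-1} \in B$, and we need a lower bound on $(M^{k'})_{u_{i-1}, a}$ for $a \in A$. This is precisely an odd-length transition between opposite parts, so Corollary~\ref{cor:mixing time for sets}(ii), applied with the bipartition roles $(X,Y)$ chosen as $(B,A)$, yields $(M^{k'})_{u_{i-1}, a} \ge \frac{1}{Kn} = \frac{1}{100 n}$, since $G$ is $100$-almost-regular so $K = 100$. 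The case of $i$ odd is entirely symmetric, swapping the roles of $A$ and $B$.

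Finally, to turn $\frac{1}{100n}$ into the second expressions $\frac{10^{-5}}{|A|}$ and $\frac{10^{-5}}{|B|}$, I would use that $G$ is $100$-almost-regular and bipartite: double counting edges between $A$ and $B$ yields $|A|, |B| \ge n/101$, and therefore $\frac{10^{-5}}{|A|} \le \frac{10^{-5} \cdot 101}{n} < \frac{1}{100 n}$, with the same bound for $|B|$. There is no real obstacle here; the only subtlety is keeping track of which parity case of Corollary~\ref{cor:mixing time for sets}(ii) applies given that $k'$ was explicitly chosen to be odd, so that the transition from $u_{i-1}$ to $u_i$ is never in the trivially zero case of the mixing-time bound.
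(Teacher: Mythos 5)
Your proposal is correct and matches the paper's intended (essentially unproved, asserted) argument: the paper treats the observation as immediate from Corollary~\ref{cor:mixing time for sets}(ii) and the parenthetical $n \le 101|A|$, and you have filled in exactly that reasoning — the Markov property to reduce to the $k'$-step transition from $u_{i-1}$, the parity bookkeeping forced by $k'$ being odd and $u_0 = v_0 \in A$, Corollary~\ref{cor:mixing time for sets}(ii) to obtain $\frac{1}{Kn} = \frac{1}{100n}$, and the double-counting bound $|A|, |B| \ge n/101$ to pass to $\frac{10^{-5}}{|A|}$ and $\frac{10^{-5}}{|B|}$.
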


By definition of $S$ this implies that for any fixed set $X \subseteq V(G)$, the random variable $|R(k) \cap X|$ stochastically dominates $|S \cap X|$. Therefore, we have the following.
\begin{lemma}\label{lem:randomwalkintersection}
Let $G$ be an $n$-vertex $100$-almost regular bipartite graph with parts $A,B$ with mixing time $k$ and let $R$ be a random walk in $G$ of length $t\leq 10 n$ starting at a given vertex. Then, for any set $X \subseteq V(G)$ we have that
$$\mathbb{P} \left(|R(k) \cap X| \leq \frac{|X|t}{10^9 kn} \right) \le e^{-\frac{|X|t}{10^9 kn}}.$$
\end{lemma}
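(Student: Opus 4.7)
The plan is to exploit the stochastic dominance asserted immediately above the statement: since $|R(k)\cap X|$ stochastically dominates $|S\cap X|$, it suffices to prove the same tail bound for $|S\cap X|$ instead. The decisive gain is that, by construction of $S$, $|S\cap X|$ is a sum of \emph{independent} Bernoulli indicators, one per index $i\in\{1,\dots,\lfloor t/k'\rfloor\}$, so a standard multiplicative Chernoff bound applies directly, in contrast with the heavily dependent random walk itself.

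The first step is to lower bound $\mu := \mathbb{E}[|S\cap X|]$. Splitting $X=X_A\sqcup X_B$ with $X_A=X\cap A$, $X_B=X\cap B$, and separating even from odd indices, one obtains
\[
\mu \;=\; E\cdot 10^{-5}\,\frac{|X_A|}{|A|} \;+\; O\cdot 10^{-5}\,\frac{|X_B|}{|B|},
\]
where $E,O$ denote the numbers of even and odd indices in $\{1,\dots,\lfloor t/k'\rfloor\}$; both are at least $\lfloor t/k'\rfloor/3$ once $\lfloor t/k'\rfloor\ge 2$. Since $G$ is $100$-almost-regular bipartite, we have $|A|,|B|\le n$, and hence $|X_A|/|A|+|X_B|/|B|\ge |X|/n$; combining this with $k'\le 2k$ yields
\[
\mu \;\ge\; \frac{\lfloor t/k'\rfloor}{3}\cdot 10^{-5}\cdot\frac{|X|}{n} \;\ge\; \frac{|X|\,t}{10^{7}\,kn}.
\]
Writing $\tau:=|X|t/(10^{9}kn)$ for the threshold in the lemma, we therefore have $\tau\le\mu/100$.

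The second step is to apply the multiplicative Chernoff inequality, which for sums of independent Bernoullis gives $\mathbb{P}[|S\cap X|\le (1-\delta)\mu]\le\exp(-\delta^2\mu/2)$ for every $\delta\in(0,1)$. Choosing $\delta=1-\tau/\mu\ge 1-10^{-2}$ yields
\[
\mathbb{P}[|S\cap X|\le\tau] \;\le\; \exp(-\mu/3) \;\le\; \exp(-\tau),
\]
using $\mu/3\ge 100\tau/3>\tau$. Combined with the stochastic dominance, this is exactly the bound claimed. I do not anticipate any real obstacle: the nontrivial ingredient — the coupling between $|R(k)\cap X|$ and $|S\cap X|$ — has already been established just before the statement, so the argument reduces to a routine expectation estimate followed by black-box Chernoff. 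The only mildly subtle point is the lower bound on $\mu$ in the extreme case where $X$ lies entirely on one side of the bipartition, but the inequality $|X_A|/|A|+|X_B|/|B|\ge |X|/n$ handles that case uniformly.
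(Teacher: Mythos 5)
The key step of your argument does not hold as stated. You assert that ``$|S\cap X|$ is a sum of independent Bernoulli indicators, one per index $i\in\{1,\dots,\lfloor t/k'\rfloor\}$'', but $S$ is a \emph{set}: if the procedure chooses the same vertex of $X$ at two distinct steps $i\ne j$, that vertex contributes only once to $|S\cap X|$. The quantity that genuinely is a sum of independent per-step Bernoullis is $N:=\sum_i \mathbbm{1}[v_i\text{ is chosen and }v_i\in X]$, and this satisfies $N\ge |S\cap X|$, not the reverse. Chernoff applied to $N$ gives a lower-tail bound on $N$, which only yields $\mathbb P(N\le\tau)\le e^{-\tau}$; since $\mathbb P(|S\cap X|\le\tau)\ge\mathbb P(N\le\tau)$, the inequality points the wrong way and nothing follows for $|S\cap X|$. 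Your expectation computation $\mu\ge|X|t/(10^7 kn)$ is a correct bound on $\mathbb E[N]$, but it is not used with a variable whose tail you can control in the required direction.

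The paper sidesteps exactly this issue by working directly with the event that a \emph{new} vertex of $X$ is added: it reduces WLOG to $|X\cap A|\ge |X|/2$, observes that as long as at most $C\le|X|/4$ new vertices have been collected the conditional probability of adding another new one at an even step is still at least $10^{-6}|X|/n$, and then bounds $\mathbb P(|S\cap X|\le C)$ by an explicit union bound over the $\le C$ ``success'' steps combined with a geometric-series estimate. Your route could in principle be repaired -- for instance by showing that the indicators $\mathbbm{1}[x\in S]$ for $x\in X$ are negatively associated (a balls-into-bins style fact) so that the multiplicative Chernoff lower-tail bound still applies to $|S\cap X|=\sum_{x\in X}\mathbbm{1}[x\in S]$, or by making precise a stopping-time comparison with a $\mathrm{Bin}(\lfloor t/k'\rfloor,q)$ variable where $q$ lower-bounds the conditional ``new-vertex'' probability -- but neither of these is what you wrote, and the latter would also reintroduce the WLOG split on $X\cap A$ versus $X\cap B$ that your uniform inequality $|X_A|/|A|+|X_B|/|B|\ge|X|/n$ was meant to avoid, because once most of the small side has been collected its per-step new-vertex probability can drop to zero. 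As written, the proof has a genuine gap at the independence claim.
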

\begin{proof}
Let $R= \{X_i : 0 \le i \le t \}$ be a random walk, and without loss of generality, suppose $X_0 \in A$. 
As noticed before, $|S \cap X|$ is stochastically dominated by $|R(k) \cap X|$, so it is enough to show the statement with $|S \cap X|$ instead of $|R(k) \cap X|$.
Note that either $X \cap A$ or $X \cap B$ has size at least $|X|/2$. Suppose without loss of generality that $|X \cap A| \ge |X|/2$, the other case is very similar. Let $C:= 10^{-9}\frac{|X|t}{kn}$, and consider the procedure that was used to define $S$, where in each step $1 \le j \le \lfloor t/k' \rfloor$, a vertex $v_j$ is (randomly) added to $S$. Suppose that a new vertex from $X$ is added to $S$ in only at most $i \le C$ steps; we are interested in the probability that this event occurs. Fix such a choice of $i$ steps. In any such step, the probability that a new vertex from $X$ is added to $S$ is at most $\max\{10^{-5} \frac{|X|}{|A|}, 10^{-5} \frac{|X|}{|B|} \} \le \frac{|X|}{100 n}$ (since $G$ is 100-almost-regular). Moreover, note that since at most $C \le \frac{|X|}{4}$ vertices from $X$ have been added to $S$, the probability that a new vertex from $X \cap A$ is added to $S$ in any step $j$ (with $j$ even) is at least $10^{-5} \frac{|X|/2 - |C|}{|A|} \ge 10^{-6} \frac{|X|}{n}$. Therefore, as there are at least $\frac{1}{2}\lfloor t/k' \rfloor - C \ge \frac{1}{4}\lfloor t/k' \rfloor$ steps $j$ (with $j$ even) where no new vertex is added to $S$, the required probability is at most 
\begin{align*}
\sum_{0 \leq i \leq C}  {\lfloor t/k' \rfloor \choose i} \left(\frac{|X|}{100 n}\right)^{i} \left(1 - 10^{-6}\frac{|X|}{ n}\right)^{\frac{1}{4}\lfloor t/k' \rfloor}. 
\end{align*}

\noindent Since the common ratio satisfies $\frac {{\lfloor t/k' \rfloor \choose i} \left(\frac{|X|}{100 n}\right)^{i}}{{\lfloor t/k' \rfloor \choose i-1} \left(\frac{|X|}{100 n}\right)^{i-1}} \ge \frac{\lfloor t/k' \rfloor - (i-1)}{i} \frac{|X|}{100 n} \ge \frac{\lfloor t/k' \rfloor - C}{C} \frac{|X|}{100 n} \ge \frac{\lfloor t/k' \rfloor }{2 C} \frac{|X|}{100 n} \ge 2$ for $0 \le i \le C$, the above sum is at most
\begin{align*}
2{\lfloor t/k' \rfloor \choose C} \left(\frac{|X|}{100 n}\right)^{C} \left( 1 - 10^{-6}\frac{|X|}{n}\right)^{ \frac{t}{8k'}}
\le
\left(\frac{2 e (t/k') |X| }{C \cdot 100 n }\right)^C \left( 1 - 10^{-6}\frac{|X|}{n}\right)^{ \frac{t}{8k'}} \leq e^{-\frac{|X|t}{10^9 kn}},
\end{align*}

\noindent where in the last inequality we used $\left(\frac{2 e (t/k') |X| }{C \cdot 100 n }\right)^C \le 10^{10 C} \le e^{30 C} \le e^{10^{-7}\frac{|X|t}{kn}}$ by plugging in the value of $C$, and that $\left( 1 - 10^{-6}\frac{|X|}{n}\right)^{ \frac{t}{8k'}} \le e^{-10^{-6}\frac{|X|}{n} \cdot \frac{t}{8k'}} \le e^{-10^{-6}\frac{|X|}{n} \cdot \frac{t}{9 k}}$. This completes the proof of the lemma.
\end{proof}

\subsection{Chords in random walks}
\label{sec:chordsinwalks}

\noindent In this subsection, we show that with very high probability, the graph induced by the vertices of two random walks contains many edges.
Recall that for a random walk $R$ of length $t$, we denote $R(k) = \{u_1,u_2, \ldots \}$ where for each $1 \leq i \leq \lfloor t/k' \rfloor$, $u_i$ is the $ik'$-th vertex of the random walk $R$ and $k' \in \{k,k+1\}$ is odd.

\begin{lemma}\label{lem:many edges}
Let $G$ be a 100-almost-regular bipartite graph on $n$ vertices with $\delta(G) = d \ge 10^8$ and mixing time $k$. Let $R_1$ and $R_2$ be random walks in $G$ of length $t$ for $n\geq t \geq \max \{10^{17} \frac{kn}{d}, 10^{25} k \log n\}$, starting at arbitrary vertices $v_1$ and $v_2$, respectively. Then, 
$$\mathbf{P} \left(e(R_1(k), R_2(k)) \leq \frac{t^2 d}{10^{32} k^2 n } \right) \leq e^{-\frac{t}{10^{24} k}}.$$
\end{lemma}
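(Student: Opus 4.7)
The plan is to combine the star-forest decomposition of Corollary~\ref{cor:star forests} with two coordinated applications of Lemma~\ref{lem:randomwalkintersection}, exploiting the independence of $R_1$ and $R_2$. I fix $d/10$ root-disjoint $AB$-star-forests $F_1, \ldots, F_{d/10}$ from Corollary~\ref{cor:star forests}, each with $n/(10^5 d)$ stars of size $d/10^7$. Root-disjointness makes them pairwise edge-disjoint, so letting $Z_i$ denote the number of $F_i$-edges with root in $R_1(k)$ and leaf in $R_2(k)$, we have $e(R_1(k), R_2(k)) \geq \sum_i Z_i$.

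The first stage processes $R_1$. I apply Lemma~\ref{lem:randomwalkintersection} to $R_1$ with $X = V_R := \bigcup_i A_i$, a set of size $n/10^6$, obtaining $|R_1(k) \cap V_R| \geq t/(10^{15} k)$ with failure probability at most $e^{-t/(10^{15} k)}$, which is already below the target $e^{-t/(10^{24} k)}$. I condition on this event and write $X_i := |R_1(k) \cap A_i|$, so $\sum_i X_i \geq t/(10^{15} k)$.

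In the second stage, for fixed $R_1$ and each $F_i$, the vertex-disjointness of stars within $F_i$ implies that the set $L_i \subseteq B$ of leaves attached in $F_i$ to roots of $R_1(k) \cap A_i$ has size $|L_i| = X_i \cdot d/10^7$, and that $Z_i = |R_2(k) \cap L_i|$ exactly. Since $R_2$ is independent of $R_1$, I apply Lemma~\ref{lem:randomwalkintersection} to $R_2$ with each $L_i$, yielding $Z_i \geq |L_i| t/(10^9 kn)$ per forest. Summing gives
\[
\sum_i Z_i \;\geq\; \frac{t}{10^9 kn} \cdot \frac{d}{10^7} \sum_i X_i \;\geq\; \frac{dt^2}{10^{31} k^2 n},
\]
which matches the target bound up to a constant.

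The principal obstacle is the union bound over the $d/10$ star-forests in the second stage: the per-forest failure bound $e^{-|L_i| t/(10^9 kn)}$ is at most $e^{-t/(10^{24} k)}$ only when $|L_i| \geq n/10^{15}$, equivalently $X_i \geq n/(10^8 d)$, which may fail for individual $i$. I handle this by restricting the sum to $I^* := \{i : X_i \geq \max(1, n/(10^8 d))\}$: in the dense regime $d \geq n/10^8$ the threshold drops to $1$ and no mass is discarded (since $X_i=0$ contributes nothing), while in the sparse regime the discarded mass $\sum_{i \notin I^*} X_i \leq (d/10) \cdot n/(10^8 d) = n/10^9$ is a lower-order term compared to $t/(10^{15} k)$ by the hypothesis $t \geq 10^{17} kn/d$ (which also forces $d \geq 10^{17} k$ via $t \leq n$), so $\sum_{i \in I^*} X_i$ still supports the required bound. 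The final union bound over $|I^*| \leq d/10 \leq n$ forests is absorbed into $e^{-t/(10^{24} k)}$ via the hypothesis $t \geq 10^{25} k \log n$, which together with the first-stage failure probability completes the argument.
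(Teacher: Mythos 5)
Your approach genuinely differs from the paper's. The paper first conditions on $R_1$ and proves, for each forest $F_i$, that at least half its stars have at least $dp/10^7$ leaves in $R_1(k)$ (via a union bound over the $2^{m_i}$ choices of half the stars, plus Lemma~\ref{lem:randomwalkintersection} applied to the union of those leaves); it then makes a \emph{single} application of Lemma~\ref{lem:randomwalkintersection} to $R_2$ over the union of all good roots, a set of size $\Theta(n)$. You instead apply the lemma once to $R_1$ on the set of all roots, then \emph{separately} to $R_2$ on each leaf-set $L_i$ and union-bound over the $d/10$ forests. The symmetry is appealing, but it puts the second-stage applications on sets $L_i$ that can be very small, which is where the argument breaks down.

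The gap is in the thresholding step. In the sparse regime $d < n/10^8$ (which is realizable: the hypotheses force only $d\ge 10^{17}k$, so $d$ can be well below $n/10^8$ once $n$ is large) your threshold is $n/(10^8 d)$ and the discarded mass is bounded by $n/10^9$. You claim this is lower order compared to the first-stage yield $t/(10^{15}k)$, but this is false: since $t\le n$ and $k\ge 1$ we always have $t/(10^{15}k)\le n/10^{15}$, which is \emph{far smaller} than $n/10^9$. Concretely, from $t\ge 10^{17}kn/d$ you only get $t/(10^{15}k)\ge 100n/d$, and $100n/d\gg n/10^9$ would require $d\ll 10^{11}$, which is not among the hypotheses. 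So in the regime $10^{11}\lesssim d < n/10^8$ the quantity $\sum_{i\in I^*}X_i$ may be zero, and the edge count is not controlled. The threshold also cannot simply be lowered: for $i\in I^*$ you need the per-forest failure probability $e^{-|L_i|t/(10^9kn)}$ to be at most roughly $e^{-t/(10^{24}k)}$, which forces $|L_i|\ge n/10^{15}$, i.e.\ exactly $X_i\ge n/(10^8 d)$. This is why the paper does not attempt a per-forest union bound over $d/10$ small sets for $R_2$; instead it arranges for $R_2$ to be tested against one large ($\Theta(n)$-sized) set, paying the combinatorial cost of a $2^{m_i}$ union bound on the $R_1$ side, where each of the $m_i/2$-subsets of stars has a leaf-set of size $\Theta(n/d)\cdot\Theta(d)=\Theta(n)$ and hence strong concentration.
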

\begin{proof}
Let $p := \frac{t}{10^9 k n}$, and note that by the bound on $t$ we have $p\geq\max\{\frac{10^8}{d},\frac{10^{16}\log n}{n}\}$. First, we apply \Cref{cor:star forests} to find $\frac{d}{10}$ root-disjoint $AB$-star-forests $F_i \subseteq G$, each consisting of $\frac{n}{10^5 d}$ stars of size $\frac{d}{10^7}$.

\begin{claim}\label{claim:}
For each $i$, with probability at least $1- e^{-np/10^{13}}$, there is a set of at least $\frac{n}{2\cdot 10^5 d}$ stars in $F_i$ such that $R_1(k)$ contains at least $\frac{dp}{10^7}$ leaves of each of those stars.
\end{claim}

\begin{proof}
Consider some $F_i$ and let $A,B$ be the bipartition of $G$, and denote by $m_i=\frac{n}{10^5 d}$ the number of stars in $F_i$.
Fix a collection of $m_i/2$ of stars in $F_i$ and note that the probability that every star in this collection has less than $dp/10^7$ leaves in $R_1(k)$ is at most $e^{-\frac{m_idp}{2\cdot10^7}}$ by applying \Cref{lem:randomwalkintersection} to the set of leaves of all the $m_i/2$ stars in the collection.

Hence, by the union bound over all such collections of $m_i/2$ stars of $F_i$, we have that the event from the statement of the claim does not hold with probability at most
$$2^{m_i} \cdot e^{-\frac{dpm_i}{2\cdot10^7}} \leq e^{\frac{dpm_i}{10^8}} \cdot e^{-\frac{dpm_i}{2\cdot10^7}} \leq e^{-\frac{dpm_i}{10^8}}=
e^{-\frac{np}{10^{13}}},$$
 where we used that $m_i \leq \frac{dpm_i}{10^8}$, since $p\geq 10^8/d$.
\end{proof}
\noindent
By a simple union bound and since $pn\geq 10^{16}\log n$, we then have that with probability at least $ 1 - ne^{-\frac{pn}{10^{13}}} \ge 1 - e^{-\frac{pn}{10^{14}}}$ the following holds: for every star-forest $F_i$, more than half of its stars each have at least $dp/10^7$ leaves in $R_1(k)$. Suppose this event occurs. Then for each $F_i$, let $A_i$ denote the set of vertices in $F_i$ which are the roots of stars with more than $dp/10^7$ leaves in $R_1(k)$. Then, we have $\sum_i |A_i| \geq \frac{1}{2} \cdot \frac{d}{10} \cdot \frac{n}{10^5 d} \ge \frac{n}{10^7}$ and so, by Lemma~\ref{lem:randomwalkintersection}, with probability at least $1-e^{-\frac{pn}{10^7}}$ we have that $|R_2(k) \cap \bigcup_i A_i| \geq \frac{pn}{10^7}$. Hence, by the choice of vertices in $\bigcup_i A_i$ we have $e(R_1(k), R_2(k)) \geq |R_2(k) \cap \bigcup_i A_i| \cdot \frac{dp}{10^7} \geq \frac{d p^2 n}{10^{14}}$ with probability at least $(1 - e^{-\frac{pn}{10^{14}}})(1-e^{-\frac{pn}{10^7}}) \ge 1 - e^{-\frac{np}{10^{15}}} = 1 - e^{-\frac{t}{10^{24} k}}$ as required. 
\end{proof}

\subsection{Self-avoiding walks in expanders}
\label{sec:self-avodingwalks}

In this subsection we show that a random walk with small mixing time in an almost-regular graph is self-avoiding with a certain positive probability.
The exact details are given in Theorem~\ref{thm:self-avoiding}, whose proof uses the ideas 
from \cite{pak2002mixing}, with the necessary changes to fit our setting.

Let $G$ be a graph with mixing time $k$. Denote by $\{X_t^v\}$ the nearest neighbour random walk in $G$ which starts at a vertex $v$.
For a vertex set $A \subseteq V(G)$, let $Q_t^v(A)$ denote the probability that $X_t^v\in A$, and let $E_A^v$ be the event that $X_t^v\notin A$ for all $t\in [k]$, i.e. the random walk starting at $v$ avoids the set $A$ in the first $k$ steps. 

\begin{theorem}\label{thm:self-avoiding}
    Let $\beta \coloneqq 10^{-28}$, and let $G$ be a 100-almost-regular graph with mixing time $k$ with $\delta(G) \ge \frac{10^3 k^2}{\beta}$. Then the probability that a random walk starting at any given vertex of $G$ and of length $\frac{\beta^2 n}{10^6 k}$ is self-avoiding is at least $e^{-\frac{\beta^3 n}{10^5 k^2}}$.
\end{theorem}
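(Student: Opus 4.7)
My plan is to adapt Pak's argument from \cite{pak2002mixing} to our almost-regular expander setting. I would partition the walk $X_0, X_1, \ldots, X_L$ of length $L = \beta^2 n/(10^6 k)$ into $T = L/k$ blocks of length $k$ (the mixing time). Let $A_{j-1}$ denote the set of vertices visited in the first $j-1$ blocks; conditional on the walk being self-avoiding so far, $|A_{j-1}| \le L$. The goal is to show that the conditional probability that block $j$ succeeds---meaning it is internally self-avoiding and disjoint from $A_{j-1}$---is at least $1 - O(\beta)$. Multiplying this per-block bound over the $T$ blocks then yields
\[
\mathbf{P}[\text{walk is self-avoiding}] \;\ge\; (1 - O(\beta))^T \;\ge\; e^{-O(\beta T)} \;=\; e^{-O(\beta^3 n / k^2)},
\]
matching the target bound for suitable constants.

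For the intra-block collisions, I would use the elementary bound $\mathbf{P}[X_i = u] \le 1/\delta(G)$, valid for any vertex $u$ and any $i \ge 1$, since every transition probability is at most $1/\delta(G)$ and $\sum_u \mathbf{P}[X_{i-1}=w]=1$. Summing over pairs of steps within a block gives an expected intra-block collision count of at most $k^2/(2\delta(G))$, which the hypothesis $\delta(G) \ge 10^3 k^2/\beta$ reduces to at most $\beta/2000$. Markov's inequality then bounds the intra-block failure probability by $\beta/2000$.

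For the inter-block collisions, I need to bound $\sum_{t=1}^{k} Q_t^v(A_{j-1})$, where $v = X_{(j-1)k}$ is the starting vertex of block $j$. The main technical difficulty here is that the mixing bound $(M^t)_{v,u} \le 2K/n + \sqrt{K}\alpha^t$ from Lemma~\ref{lem:markovchain} is ineffective for small $t$, since $\sqrt{K}\alpha^t$ may be as large as $\sqrt{K}$ there. My plan is to resolve this with a two-regime estimate: for small $t$ I would use the elementary bound $(M^t)_{v,u} \le 1/\delta(G)$; for $t$ large enough that $\sqrt{K}\alpha^t$ falls below $K/n$, I would use the mixing bound, which then reduces to $O(K/n)$. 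Choosing the crossover point $t^* = \log(\sqrt{K}\delta(G))/\log(1/\alpha)$ where the two bounds match, and summing over $t \in [1,k]$ and $u \in A_{j-1}$ (with $|A_{j-1}| \le L$), would give a per-block inter-block failure of $O(\beta)$.

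The main obstacle is precisely this two-regime estimate, and in particular the careful handling of the non-mixed early times within each block, where the standard mixing bound is too weak. The elementary degree bound $(M^t)_{v,u} \le 1/\delta(G)$ is what rescues us for small $t$, and the hypothesis $\delta(G) \ge 10^3 k^2/\beta$ is tailored precisely to make this bound effective in that regime, while the mixing bound takes over for larger $t$. Combining the two bounds gives the required per-block probability and, via the product over the $T$ blocks, the stated lower bound on the self-avoidance probability.
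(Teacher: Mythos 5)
Your intra-block estimate (expected pairwise collisions $\le k^2/(2\delta)$, then Markov) is fine and essentially matches the paper's Claim~\ref{selfavoidingkwalk}. The gap is in the inter-block step, and it is fatal as written. You want to bound $\sum_{t=1}^{k} Q_t^v(A_{j-1})$ by $O(\beta)$ for the \emph{specific} vertex $v = X_{(j-1)k}$, via a two-regime estimate. But $|A_{j-1}|$ can be as large as $L = \beta^2 n/(10^6 k)$, and in the regime of interest $\delta(G)$ is tiny compared to $L$ (e.g.\ in the application $\delta(G') = \Theta(\log^7 n)$ while $L = \Omega(n'/\log^3 n)$). Then already the single term $Q_1^v(A_{j-1}) \le \min\{|A_{j-1}|/\delta(G),1\}$ can be of order $1$: if the walk has visited all of $v$'s neighbours, the very next step collides with $A_{j-1}$ with probability $1$. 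No choice of crossover $t^*$ saves this, since the elementary bound $|A|/\delta$ is already vacuous at $t=1$. In short, there is no uniform-in-$v$ bound of $O(\beta)$ on $\sum_{t\le k} Q_t^v(A)$; such a bound only holds for \emph{most} $v$, and the conditional law of $X_{(j-1)k}$ given self-avoidance is not controlled in your argument (it need not concentrate on the favourable vertices).

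The paper resolves precisely this with two ingredients absent from your plan. First, Claim~\ref{cl:bad set} uses reversibility of the walk in an almost-regular graph (the inequality $Q_t^v(u) \le 100\,Q_t^u(v)$) to show by an averaging argument that $\sum_v \mathbf P[\overline{E_A^v}] \le 100k|A|$; hence the set of ``bad'' starting points $v$ with $\mathbf P[\overline{E_A^v}] \ge \beta$ has size at most $100k|A|/\beta$, an $O(\beta)$-fraction of $V(G)$. Second, the induction tracks a stronger invariant than self-avoidance alone: that the current endpoint $X_{(i-1)k}$ lies in the set $Z_{i-1}$ of ``good'' vertices (those with $\alpha_u(A_{i-2})$ large). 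One then uses the mixing bound on $Q_k^u(\cdot)$ (\Cref{lem:good set}) to show the next endpoint $X_{ik}$ is again good with probability $\ge 1-\beta/10$, which is what allows the per-block probability $1-2\beta$ to be applied at every step. Your proposal's two-regime idea is not what carries the argument; the reversibility averaging plus the ``good endpoint'' bookkeeping is.
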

\begin{proof}
The following claim allows us to show that most vertices $v$ are such that if we start a random walk at the vertex $v$, it is likely to avoid a given set. 

\begin{claim}\label{cl:bad set}
    For every set $A \subseteq V(G)$, it holds that the set $B$ of vertices $v$ such that $\mathbf P(\overline{E_A^v})\geq \beta$ is of size at most $\frac{100 k|A|}{\beta}$.
\end{claim}
\begin{proof}[Proof of claim]
    Notice first that $$\mathbf P\Big(\overline{E_A^v}\Big)\leq \sum_{t=1}^k \mathbf P(X_t^v\in A)=\sum_{t=1}^k Q_t^v(A).$$

\noindent For every pair of vertices $v,u\in G$ we have that $Q_t^v(u)\leq 100 \cdot Q_t^u(v)$  because of our assumption that $G$ is 100-almost-regular. Indeed, for every walk $P=v_0,v_1,\ldots, v_t$ we know that 
    $$
    \mathbf P\big[X_i^{v_0}=v_i\text{  for all $i\in[t]$}\big]=\prod_{i=0}^{t-1} \frac{1}{d(v_i)}.
    $$
    Since $Q_t^v(u)=\sum_P \mathbf P \big[X_i^{v_0}=v_i\text{  for all } i\in[t]\big]$, where the sum is over all walks $P=v_0,v_1,\ldots, v_t$ of length $t$ with $v_0=v$ and $v_t=u$, we conclude that $\frac{Q_t^v(u)}{Q_t^u(v)}=\frac{d(u)}{d(v)}\leq \frac{\Delta(G)}{\delta(G)}\leq 100$.

\noindent Using this, we obtain the following:
\begin{align*}
\sum_{v\in[n]} \mathbf P\Big[\overline{E_A^v}\Big] &\leq \sum_{v\in[n]} \sum_{t=1}^k Q_t^v(A) = \sum_{v\in[n]} \sum_{t=1}^k \sum_{a\in A} Q_t^v(\{a\})\leq  \sum_{t=1}^k \sum_{a\in A} \sum_{v\in[n]} 100 Q_t^a(\{v\})= 
100 k|A|.
\end{align*}
This immediately implies the claim as $\sum_{v\in[n]} \mathbf P\Big[\overline{E_A^v}\Big] \ge \beta |B|$ by the definition of $B$.
\end{proof}
\noindent The following claim gives the probability that a random walk of length $k$ is self-avoiding and additionally avoids a fixed  set of $k$ vertices.
\begin{claim}
\label{selfavoidingkwalk}
Let $S\subseteq V(G)$ with $|S|\leq k$, and let $v\in V(G)$. The probability that $X_i^v\neq X_j^v$  for every $1\leq i<j\leq k$ and that $X_i^v\notin S$ for all $i\in[k]$, is at least $1-\frac{200 k^2}{\Delta(G)}$.   
\end{claim}
    \begin{proof}[Proof of claim]
        For each $i \le k$, out of the $d(X_i^v)$ neighbours of $X_i^v$, only at most $|S|+i$ vertices are contained in $S \cup \{X_0^v, X_1^v, \ldots, X_{i-1}^v\}$. Hence the required probability is at least 
\[
\prod_{i=0}^{k-1} \Big(\frac{d(X_i^v)-|S|-i}{d(X_i^v)}\Big)\geq \Big(1-\frac{2k}{\delta(G)}\Big)^k \geq 1-\frac{200 k^2}{\Delta(G)}.
\] \end{proof}

\noindent Now, for every $v\in V(G)$ and every set $A\subseteq V(G)$ define $$\alpha_v(A) \coloneqq\min_{S\subseteq V(G), |S|=k} \mathbf P \Big[ E_{A\cup S}^v\Big].$$

\begin{lemma}\label{lem:good set}
    Let $A\subseteq V(G)$ with $|A|\leq \frac{\beta^2n}{10^6 k}$, and let $X$ be the set of vertices $v\in V(G)$ such that $\alpha_v(A) \ge 1-\beta-\frac{200k^2}{\Delta(G)}$. Then $|X| \ge n-\frac{100 k|A|}{\beta}$. In particular, for every $v \in V(G)$ we have $Q_k^v(X)\geq 1- \frac{10^5 k|A|}{\beta n} \geq 1 - \frac{\beta}{10}$. 

\end{lemma}
\begin{proof}
Note that by Claim~\ref{selfavoidingkwalk}, we have $\alpha_v(A) \geq \mathbf P \Big[ E_{A}^v\Big]-\frac{200 k^2}{\Delta(G)}$. For every vertex $v \not \in X$, we have  $\alpha_v(A) \le 1-\beta-\frac{200k^2}{\Delta(G)}$, so $\mathbf P \Big[ E_{A}^v\Big] \le 1 - \beta$ i.e., $\mathbf P \Big[ \overline{E_{A}^v}\Big] \ge \beta$. Hence, by Claim~\ref{cl:bad set}, $|\overline{X}| \le \frac{100 k |A|}{\beta}$. Finally, for every vertex $v$, we have by \Cref{cor:mixing time for sets} that $Q_k^v(\overline{X}) \le \frac{400}{n} |\overline{X}| \le \frac{10^5 k |A|}{\beta n}$, as desired. 
\end{proof}

\noindent For every $t \le \frac{\beta^2n}{10^6 k^2}$, let $A_t \coloneqq \{X^v_j\}_{j\leq tk}$ be the set of vertices visited by the random walk in the first $tk$ steps, and let $Z_{t}$ be the set of vertices $u$ for which $\alpha_u(A_{t-1}) \ge 1-\beta-\frac{200 k^2}{\Delta(G)}$.

Let us now show by induction on $i$ that with probability at least $(1-2\beta)^{i}$ our random walk is self-avoiding after $ik$ steps and moreover, it ends in a vertex of $Z_{i}$. 
By setting $i=\frac{\beta^2n}{10^6 k^2}$ we can then complete our proof of Theorem~\ref{thm:self-avoiding}. As $(1-2\beta) \geq e^{-10\beta}$ for small $\beta>0$, we get $(1-2\beta)^{\frac{\beta^2n}{10^6 k^2}}\geq e^{-\frac{\beta^3n}{10^5 k^2}}$.

To that end, suppose that with probability at least $(1-2\beta)^{i-1}$  our random walk is self-avoiding after $(i-1)k$ steps and moreover, it ends in a vertex $u \in Z_{i-1}$ i.e., $X^v_{(i-1)k}= \{u\}$.

Now, we claim that the probability that our random walk is self-avoiding in the next $k$ steps, avoids $\{X^v_j\}_{j\leq (i-1)k}$ and satisfies that $X^v_{ik}\in Z_i$ is at least 
\begin{align*}
    1- \frac{\beta}{10}-\Big(1-\alpha_u(A_{i-2})\Big)-\frac{200 k^2}{\Delta(G)}\geq 1-\frac{\beta}{10}-\left(\beta + \frac{200 k^2}{\Delta(G)}\right) - \frac{200k^2}{\Delta(G)} \ge 1-2\beta.
\end{align*}
Indeed, by Lemma~\ref{lem:good set}, the probability that $X^v_{ik}\in Z_i$ is at least $1-\frac{\beta}{10}$ and the probability that the random walk $\{X^v_{(i-1)k+1},X^v_{(i-1)k+2}, \ldots, X^v_{ik}\}$ does not avoid $A_{i-2}\cup \{X^v_{(i-2)k+1},X^v_{(i-2)k+2},\ldots, X^v_{(i-1)k}\} = A_{i-1}$ is at most $1-\alpha_u(A_{i-2})$ (which is at most $\beta + \frac{200 k^2}{\Delta(G)}$ since $u \in Z_{i-1}$), 
and by Claim~\ref{selfavoidingkwalk}, the probability that the random walk $\{X^v_{(i-1)k+1},X^v_{(i-1)k+2}\ldots, X^v_{ik}\}$ is not self-avoiding is at most $\frac{200 k^2}{\Delta(G)}$. Putting all of this together and using that $\Delta(G) \ge \frac{10^3 k^2}{\beta}$, the above inequalities hold, as desired. This completes the proof of Theorem~\ref{thm:self-avoiding}.
\end{proof}

\subsection{Putting everything together}
\label{sec:puttingeverythingtogether}

\begin{proof}[Proof of Theorem~\ref{thm:main}]
Let $G$ be a graph on $n$ vertices for $n$ large enough, and with average degree $d(G) \coloneqq d \ge \log^8 n$. First we use Lemma~\ref{findingalmostregexpander} to find a $100$-almost-regular bipartite subgraph $G'$ on $n'$ vertices with average degree at least $\frac{d}{600 \log n}$ which is a $\frac{1}{10 \log n}$-expander. Now, by Corollary \ref{cor:mixing time for sets}, we have that $G'$ has mixing time at most $k := 10^{10} \log ^2 n \log n'$. Let $\beta = 10^{-28}$.

Consider now a random walk $R = \{X^{v_0}_j\}_{j \le t}$ starting at an arbitrary vertex $v_0 \in V(G')$ and of length $t \coloneqq \frac{\beta^2 n'}{10^6 k}$.
Let $\mathcal{E}_1$ be the event that $R$ is self-avoiding, and let $\mathcal{E}_2$ be the event that there is an edge between the first $t/4$ and last $t/4$ vertices of $R$. 

As we have several parameters, we now collect several simple inequalities which hold between them, and which we use to complete our proof. Note first that since the average degree in $G'$ is at least $\frac{d}{600\log n}$ and $G'$ is $100$-almost-regular, we have $\delta(G')\geq \frac{d}{10^5\log n}\geq \frac{\log^7n}{10^5}$. Note further that trivially $n'\geq \delta(G')\geq \frac{\log^7n}{10^5}$, and also that $\delta(G')\gg k^2=O(\log^6n)$.

Now, by Theorem~\ref{thm:self-avoiding} the event $\mathcal{E}_1$ occurs with probability at least $e^{-\frac{\beta^3 n'}{10^5 k^2}} = e^{-\frac{10 \beta t}{k}}$.
Now we want to show that $\frac{t}{4} \ge \max \{ \frac{10^{17}k n'}{\delta(G')}, 10^{25} k \log n' \}$, so that we can apply Lemma \ref{lem:many edges} to obtain that with probability at least $1-e^{-\frac{t/4}{10^{24}k}}$ the event $\mathcal{E}_2$ occurs. Indeed, the first inequality follows from the fact that $t=\Theta(\frac{n'}{k})$ and $\delta(G')\gg k^2$. The second inequality follows from $k^2\log n'=o(n')$. To see why $k^2\log n'=o(n')$ holds, we have two simple cases. If $n'\geq \log^8n$ then this trivially holds as $k^2=O(\log^6 n)$, and otherwise $\log n'=O(\log\log n)$, so again we are done because  $n' = \Omega(\log^7 n)$.

Finally, for each $i\in[k]$, let $W_i$ be the random walk starting at the $(\frac{t}{4}+i)$-th step of the random walk $R$ and finishing at step $\frac{3t}{4}$ of $R$. For each $W_i$, we will consider the set $W_i(k)$, and show that it spans many edges with high probability. Again, we can easily check that $\frac{t/2-i}{2}\geq \frac{t}{5} \geq \max \{ \frac{10^{17}k n'}{\delta(G')}, 10^{25} k \log n' \}$, so by Lemma~\ref{lem:many edges} for every $i\in[k]$ we have that $e(W_i(k)) \ge \frac{(t/5)^2 \delta(G')}{10^{32} k^2 n'} > \frac{2t}{k}$ with probability at least $1-e^{-\frac{t/5}{10^{24}k}}$, as we can split the random walk $W_i$ into two random walks of length at least $t/5$. Let $\mathcal{E}_3$ be the event that for all $i \in [k]$, $e(W_i(k)) \ge \frac{2t}{k}$. Since $k=O(\log^3 n)$ and $\frac{t}{k}=\Omega(\log n)$, by a union bound,
$\mathcal{E}_3$ occurs with probability at least  $1-ke^{-\frac{t}{10^{25}k}}\geq 1-e^{-\frac{t}{10^{26}k}}$.

Since $\beta = 10^{-28}$, we have $\mathbb{P}(\mathcal{E}_1 \cap \mathcal{E}_2 \cap \mathcal{E}_3)\geq  \mathbb P(\mathcal{E}_1)-\mathbb P(\overline{\mathcal{E}_2})-\mathbb P(\overline{\mathcal{E}_3})\geq
e^{-\frac{10 \beta t}{k}}-e^{-\frac{t}{10^{26}k}}-e^{-\frac{t}{10^{25}k}}>0$. Moreover, the event $\mathcal{E}_1 \cap \mathcal{E}_2 \cap \mathcal{E}_3$ implies the existence of a cycle of length $t$ with at least $t$ chords, since if the random walk $R$ is self-avoiding, then the edges spanned by the sets $W_i(k)$ are mutually disjoint for $i \in [k]$. This completes our proof of Theorem~\ref{thm:main}.
\end{proof}

\section*{Concluding remarks}
In this paper we have shown that every $n$-vertex graph with $\Omega(n\log^8 n )$ edges contains a cycle $C$ with at least $|C|$ chords. Although this is a significant improvement upon the previous bound~\cite{chen1996proof} of $\Theta(n^{3/2})$ edges, we believe that the truth is closer to $\Theta(n)$. It would be interesting to show an upper bound of this order (which would be optimal), or to show any lower bound which is super-linear.

Another avenue towards understanding this problem is to consider the following closely related question. What is the largest $t=t(e,n)$ so that any $n$-vertex graph $G$ with $e = e(n)$ edges is guaranteed to contain some cycle $C$ with at least $t |C|$ chords? Let us note that our proof gives $t(e, n)=\Omega\left(\frac{e}{n\log ^7n}\right)$ for $e=\Omega(n\log^8n)$, and that the question from the previous paragraph is whether $t(e,n)\geq 1$ when $e\geq cn$ for a large enough absolute constant $c$.

Let us note that we did not make an attempt to improve the power of the logarithmic factor or the used absolute constants in our result, in order to keep the presentation clean. We expect that one can save a few logarithmic factors by being more careful, but new ideas are certainly required to push the bound very close to $\Theta(n)$, even if we assume the original graph is almost-regular and expanding. Roughly speaking, the reason is that we can only guarantee that the random walk is self-avoiding up to length $O\left(\frac{n}{k}\right)$, where $k$ is the mixing time of the graph (which is at least of order $\log^2 n$ in our proof). Now, if we assume the set of vertices in the random walk behaves like a random set of vertices of size $\Theta\left(\frac{n}{\log^2n}\right)$, then the expected number of edges spanned by the set is $ \Theta\left(\frac{e}{n^2} (\frac{n}{\log^2n})^2\right) = \Theta\left(\frac{e}{\log ^4 n}\right)$, which is at least $\Theta\left(\frac{n}{\log^2n}\right)$ only when we have $e=\Omega(n \log^2n)$ edges in our graph. Additional logarithmic factors are used in our proof for cleaning the graph to find an almost-regular expander in it and because the random walk is not exactly a random set.

\vspace{0.3cm}
\noindent
{\bf Acknowledgements.}
We would like to thank Zachary Hunter
 for helpful comments and for carefully reading our paper.
 
\bibliographystyle{plain}

\end{document}